\documentclass[11pt,reqno]{amsart}

\usepackage{amsmath,amsthm,amssymb,amscd,url,enumerate,enumitem,mathtools}
\usepackage{graphicx}
\usepackage[margin=1 in]{geometry}
\usepackage{afterpage}
\usepackage{tikz}
\usepackage{pgfplots}
\usepackage{todonotes}
\usepackage[all]{xy}
\usepackage[colorlinks=true,citecolor={blue},linkcolor = purple]{hyperref}
\usepackage{tabularx}
\usepackage{subcaption} 
\usepackage{float}		
\usepackage{booktabs}
\usepackage{caption}
\usepackage{subcaption}
\usepackage{comment}

\usepackage[square]{natbib}
\setcitestyle{numbers}


\definecolor{lightsalmon}{rgb}{1.0, 0.63, 0.48}

\newcommand{\dnd}{\nmid}

\newcommand{\TITLE}{Monogenic Fields Arising from Trinomials}
\newcommand{\TITLERUNNING}{}


\theoremstyle{plain}
\newtheorem{theorem}{Theorem}

\newtheorem{lemma}[theorem]{Lemma}
\newtheorem{corollary}[theorem]{Corollary}

\theoremstyle{definition}

\theoremstyle{remark}
\newtheorem{remark}[theorem]{Remark}

\numberwithin{theorem}{section}


%
  {\end{list}}

%
  {\end{list}}

%


\newcommand{\tightoverset}[2]{%
  \mathop{#2}\limits^{\vbox to -.5ex{\kern-1.05ex\hbox{$#1$}\vss}}}

\renewcommand{\a}{\alpha}




\def\Ocal{{\mathcal O}}



\newcommand{\FF}{\mathbb{F}}

\newcommand{\NN}{\mathbb{N}}

\newcommand{\QQ}{\mathbb{Q}}

\newcommand{\ZZ}{\mathbb{Z}}








\newcommand{\ind}{\operatorname{ind}}



\title[\TITLERUNNING]{\vspace*{-1.3cm} \TITLE}

\date{\today}

\author[R. Ibarra]{Ryan Ibarra}
\author[H. Lembeck]{Henry Lembeck}
\author[M. Ozaslan]{Mohammad Ozaslan}
\author[H. Smith]{Hanson Smith}
\author[K. Stange]{Katherine E. Stange}
\email{Henry.Lembeck@colorado.edu}
\email{Mohammad.Ozaslan@colorado.edu}
\email{Ryan.Ibarra@colorado.edu}
\email{hanson.smith@uconn.edu}
\email{kstange@math.colorado.edu}

\address{
Department of Mathematics, University of Colorado, Campus Box 395, Boulder, Colorado 80309-0395 USA}
\address{Department of Mathematics, University of Connecticut, 341 Mansfield Road U1009
Storrs, CT 06269-1009
USA}

\keywords{
monogenic, power integral basis, ring of integers, trinomial}
\subjclass[2020]{11R04}


\begin{document}

\begin{abstract}
We call a polynomial \emph{monogenic} if a root $\theta$ has the property that $\mathbb{Z}[\theta]$ is the full ring of integers of $\mathbb{Q}(\theta)$. Consider the two families of trinomials $x^n + ax + b$ and $x^n + cx^{n-1} + d$. For any $n>2$, we show that these families are monogenic infinitely often and give some positive densities in terms of the coefficients. When $n=5$ or 6 and when a certain factor of the discriminant is square-free, we use the Montes algorithm to establish necessary and sufficient conditions for monogeneity, illuminating more general criteria given by Jakhar, Khanduja, and Sangwan using other methods. Along the way we remark on the equivalence of certain aspects of the Montes algorithm and Dedekind's index criterion.
\end{abstract}

\maketitle

\section{Introduction}


Let $K$ be a number field, and denote its ring of integers by $\mathcal{O}_K$. If $\mathcal{O}_K = \ZZ[\theta]$ for some $\theta \in \mathcal{O}_K$, we say that $\mathcal{O}_K$ admits a \emph{power integral basis} or that $K$ is \emph{monogenic}. The classification of monogenic number fields is often known as Hasse's problem.

We use the term \emph{monogenic} to refer to any polynomial $f(x)\in\ZZ[x]$ for which a root $\theta$ has the property that $\ZZ[\theta]$ is the full ring of integers in $\QQ(\theta)$.  Our work seeks to give sufficient conditions for certain polynomials to be monogenic.  By elementary considerations, any polynomial having a square-free discriminant is automatically monogenic.  Both Kedlaya \cite{KedlayaDisc} and Boyd, Martin, and Thom \cite{TriDiscMono} find families of polynomials with square-free discriminant.  We study families with discriminants that are not square-free.

Our main tool in approaching Hasse's problem is the Montes algorithm (for an overview, see \citep{Hanson}; for in-depth treatments, see \cite{Montes} or \cite{Montes2}). We limit ourselves to irreducible trinomials of the form $x^n + ax + b$ or $x^n + cx^{n-1} + d$, with $n=5$ and $6$. Note that the discriminants of these polynomials are not square-free in general (see Theorem \ref{trinomialdisc}).

When a certain factor of the discriminant is square-free, we are able to provide necessary and sufficient conditions 
for the monogeneity of these families 
  (Theorems \ref{QuinticLinearTerm}, \ref{SexticLinearTerm}, \ref{QuinticnMinusOneTerm}, and \ref{SexticnMinusOneTerm}). 
 Using the Montes algorithm to treat the case $n = 4$ has already been studied in \cite{Hanson}. 
Furthermore, we demonstrate infinite families of polynomials (Theorems \ref{DenseLinear} and \ref{DenseNotLinear}) whose roots yield power integral bases for their associated rings of integers infinitely often, namely $x^n + bx + b$ and $x^n + cx^{n-1} + cd$. 
The reader wishing to see the full statements of our results should proceed to Section \ref{Results}.

The literature regarding monogenic fields is extensive. See \cite{GaalsBook} for an extensive and very recent survey of much of the literature; this work has a very in-depth perspective on index form techniques. A general survey can also be found in \cite{Narkiewicz} as well as \cite{EvertseGyoryBook}. Much of the literature focuses on a given degree or Galois group. 
Classically, monogeneity is known for cyclotomic fields and the maximal real subfields thereof.  Gras \cite{Grasprime} shows that, with the exception of maximal real subfields of cyclotomic fields, abelian extensions of prime degree greater than or equal to 5 are not monogenic. Gras \cite{Gras6} also shows that almost all abelian extensions with degree coprime to 6 are not monogenic. Gassert \cite{g17} gives necessary and sufficient conditions for the monogeneity of extensions of the form $x^n+a$; when $n$ is prime see \cite{Westlund}. In \cite{g14}, Gassert investigates the monogeneity of extensions given by shifted Chebyshev polynomials. Jones and Phillips \cite{JPTri} investigate trinomials of the form $x^n+a(m,n)x+b(m,n)$ with $m$ an indeterminate. They find infinitely many distinct monogenic fields and classify the Galois groups, which are either $S_n$ or $A_n$. Although there is overlap with our family $x^n+ax+b$, the methods we employ are distinct. 

As this work was in final edits for release, the authors were made aware of an overlapping recent parallel research line.  
Jakhar, Khanduja, and Sangwan (\cite{JKS1} and \cite{JKS2}) established necessary and sufficient conditions for any trinomial to be monogenic. Their criteria are more general than ours, but our methods are distinct. 
The conditions in our theorems are also more succinct and this allows us to analyze the density of our families; such an analysis is not present in \cite{JKS1} or \cite{JKS2}. 
Concurrently but independent from our work, Jones and White \cite{JonesWhite} prove infinitude and analyze the density of certain families of monogenic trinomials.  In particular, they provide a more complete density theorem than our Theorem \ref{DenseLinear} for trinomials of the form $x^n+bx+b$, but do not address the family in Theorem \ref{DenseNotLinear}.

The outline of the paper is as follows. In Section \ref{Notation} we establish our setup, quote some previous results we will need, and give a very brief overview of part of the Montes algorithm, our main tool in proving these trinomials yield monogenic fields. We will formally state our results in Section \ref{Results}. With Section \ref{Monogeneity} we use the Montes algorithm to prove the roots of the trinomials we are considering yield power integral bases. Section \ref{Infinity} establishes the infinitude of some of our families. Finally, Section \ref{data} contains some computational data for comparison 
to the densities of Theorems \ref{DenseLinear} and \ref{DenseNotLinear}.

\noindent \textbf{Acknowledgments.} The authors would like to thank the Mathematics Department at the University of Colorado Boulder for hosting and supporting the summer 2018 REU that allowed the authors to conduct this research. We also thank Sebastian Bozlee for the help with the code for Section \ref{data}.

\section{Notation, Definitions, and Lemmas}\label{Notation}

In Table \ref{tab:Notation} we outline some standard notation that will be in use throughout the paper.

\begin{figure}[h!]
\begin{table}[H]\caption{Notation}
\begin{center}
\bgroup
\def\arraystretch{1.2}
\begin{tabular}{r c p{10cm} }
\toprule
$K$ &  &  a finite extension of $\QQ$ \\
$\mathcal{O}_K$ &  & the ring of integers of $K$ \\
$\Delta_K$ & & the absolute discriminant of $K$ \\
$f,g, \phi$ & & a monic polynomial in $x$ \\
$\Delta_f$ & & the discriminant of the polynomial $f$ \\
$\theta$ & & a root of a polynomial \\
$\deg f$ & & the degree of the polynomial $f$\\
$a,b,c,\dots$ & & integer coefficients of a polynomial  \\
$p$ & & a prime number \\ 
$v_p$ & & the $p$-adic valuation, normalized so $v_p(p)=1$ \\ 
$\overline{f}$ & & $f$ as viewed in $(\ZZ/m\ZZ)[x]$, when $m$ is clear \\
\bottomrule
\end{tabular}
\egroup
\end{center}
\label{tab:Notation}
\end{table}
\end{figure}

We will need the following well-known result relating field discriminants and polynomial discriminants. Let $f$ be a monic irreducible polynomial of degree $n>1$ and let $\theta$ be a root.  Then
\begin{equation}\label{IndexThm}
\Delta_f=\Delta_K[\Ocal_K:\ZZ[\theta]]^2.
\end{equation}
We can see that it is essential to know the discriminant. For this it is nice to have the following computation of Greenfield and Drucker.

\begin{theorem}\label{trinomialdisc} \cite[Theorem 4]{DiscTri} Consider the trinomial $f(x)=x^n+ax^k+b$. Write $N$ for $\dfrac{n}{\gcd(n,k)}$ and $K$ for $\dfrac{k}{\gcd(n,k)}$. The discriminant of the trinomial is
\[\Delta_f=(-1)^{\frac{n^2-n}{2}}b^{k-1}\left(n^Nb^{N-K}-(-1)^N(n-k)^{N-K}k^Ka^N\right)^{\gcd(n,k)}.\]
\end{theorem}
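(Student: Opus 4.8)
The plan is to compute the discriminant through the resultant identity $\Delta_f = (-1)^{n(n-1)/2}\operatorname{Res}(f,f')$, which is valid since $f$ is monic, and then to evaluate the resultant as a product over roots. Writing $\theta_1,\dots,\theta_n$ for the roots of $f$, monicity gives $\operatorname{Res}(f,f') = \prod_{i=1}^n f'(\theta_i)$, so the whole problem reduces to understanding $\prod_i f'(\theta_i)$ and carefully tracking the resulting signs.

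First I would factor the derivative as $f'(x) = nx^{n-1}+akx^{k-1} = x^{k-1}(nx^{n-k}+ak)$. This splits the product into $\prod_i \theta_i^{k-1}$ and $\prod_i(n\theta_i^{n-k}+ak)$. The first factor is immediate from Vieta's formulas: $\prod_i \theta_i = (-1)^n b$, so $\prod_i \theta_i^{k-1} = (-1)^{n(k-1)}b^{k-1}$, which already produces the $b^{k-1}$ appearing in the statement. The second factor is again a resultant, $\prod_i(n\theta_i^{n-k}+ak) = \operatorname{Res}(f, nx^{n-k}+ak)$, and I would flip it using $\operatorname{Res}(f,g) = (-1)^{(\deg f)(\deg g)}\operatorname{Res}(g,f)$ so as to express it as a product over the roots of the much simpler polynomial $g(x) = nx^{n-k}+ak$.

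The heart of the argument is this last product. Each root $\beta$ of $g$ satisfies $\beta^{n-k} = -ak/n$, and substituting into $f$ and using this relation collapses $f(\beta)$ to $\tfrac{a(n-k)}{n}\beta^k + b$. Writing the roots as $\beta_0\zeta^j$ for a primitive $(n-k)$-th root of unity $\zeta$, the quantities $\zeta^{jk}$ run over the $(N-K)$-th roots of unity, each attained exactly $\gcd(n,k)$ times; this is precisely where the exponent $\gcd(n,k)$ and the reduced quantities $N,K$ enter. Grouping accordingly and applying $\prod_{\omega^m=1}(Y - c\omega) = Y^m - c^m$, together with the exponent computation $\beta_0^{k(N-K)} = (-ak/n)^K$, yields that this product equals $\big(b^{N-K} - (-1)^N n^{-N}a^N(n-k)^{N-K}k^K\big)^{\gcd(n,k)}$. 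I expect this bookkeeping --- correctly counting the multiplicities of the roots of unity via the gcd structure and managing the fractional powers of $\beta_0$ --- to be the main obstacle.

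Finally I would reassemble the pieces. The power $n^n$ coming from $\operatorname{lc}(g)^{\deg f}$ distributes as $(n^N)^{\gcd(n,k)}$ and can be absorbed into the bracketed expression to clear all denominators, producing exactly $\big(n^N b^{N-K} - (-1)^N(n-k)^{N-K}k^K a^N\big)^{\gcd(n,k)}$. The remaining sign contributions $(-1)^{n(k-1)}$ and $(-1)^{n(n-k)}$ from the two factors and the resultant flip combine to $(-1)^{n(n-1)}=1$, so the only surviving sign is the $(-1)^{(n^2-n)/2}$ from the discriminant--resultant identity, giving the claimed formula.
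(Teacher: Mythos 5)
The paper does not prove this statement; it is quoted verbatim from Greenfield--Drucker \cite[Theorem 4]{DiscTri}, so there is no internal proof to compare against. Your proposal is correct and is essentially the standard derivation (and the same strategy as the cited source): reduce $\Delta_f$ to $\operatorname{Res}(f,f')$, factor $f'(x)=x^{k-1}\bigl(nx^{n-k}+ak\bigr)$, handle the $x^{k-1}$ factor by Vieta, and flip the remaining resultant onto the roots of $g(x)=nx^{n-k}+ak$. All the delicate bookkeeping checks out: $\gcd(n-k,k)=\gcd(n,k)$ gives that $\zeta^{jk}$ runs over the $(N-K)$-th roots of unity with multiplicity $\gcd(n,k)$; the identity $k(N-K)=K(n-k)$ makes $\beta_0^{k(N-K)}=(-ak/n)^K$ well defined (no fractional powers actually occur); $n^n=(n^N)^{\gcd(n,k)}$ clears the denominators; and the signs $(-1)^{n(k-1)}\cdot(-1)^{n(n-k)}=(-1)^{n(n-1)}=1$ leave only $(-1)^{(n^2-n)/2}$. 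The one point worth a sentence in a written-up version is the degenerate case $ak=0$ or $b=0$, where the roots of $g$ are not distinct or the root-of-unity grouping collapses; since both sides of the claimed identity are polynomials in $a$ and $b$, establishing it on the Zariski-dense set $ab\neq 0$ suffices.
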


We now outline the notation necessary for the Montes algorithm. In its full generality the Montes algorithm is a powerful $p$-adic factorization algorithm, but we do not need the full extent of the algorithm for our application to monogeneity. We require a theorem, originally due to Ore \cite{Ore}, that appears in an early step of the algorithm. 

We extend the standard $p$-adic valuation by defining the $p$-adic valuation of $f(x) = a_n x^n + \cdots + a_1 x + a_0 \in \ZZ[x]$ to be 
	\[ v_p(f(x)) = \min_{0 \leq i \leq n} ( v_p(a_i) ). \]
If $\phi(x), f(x) \in \ZZ[x]$ are such that $\deg \phi \leq \deg f$, then we can write
    	\[f(x)=\sum_{i=0}^k a_i(x)\phi(x)^i,\]
for some $k$, where each $a_i(x) \in \ZZ[x]$ has degree less than $\deg \phi$. We call the above expression the \emph{$\phi$-adic development} of $f(x)$. We associate to the $\phi$-adic development of $f$ a Newton polygon by taking the lower convex hull\footnote{Loosely speaking, visualize the points $(i,v_p(a_i(x)))$ as nails in the $xy$-plane and pull a taut string upward from the negative $y$-axis to the positive $y$-axis. We consider the open polygon formed by the string intersecting the nails.} of the integer lattice points $(i,v_p(a_i(x)))$. We call the sides of the Newton polygon with negative slope the \emph{principal $\phi$-polygon}. The number of integer lattice points $(m,n)$, with $m,n>0$, on or under the principal $\phi$-polygon is called the \emph{$\phi$-index of $f$} and denoted $\ind_\phi(f)$. Associated to each side of the principal $\phi$-polygon is a polynomial called the \emph{residual polynomial}. To avoid technicality, we will not define the residual polynomial in general. For our purposes it suffices to note that residual polynomials attached to sides whose only integer lattice points are the initial vertex and terminal vertex are linear polynomials. Again, the interested reader is encouraged to consult \cite{Hanson} for a brief account of the Montes algorithm or \cite{Montes} and \cite{Montes2} for in-depth descriptions and proofs.

Now we state a theorem of Ore \cite{Ore} which will yield our main tool in proving monogeneity.
\begin{theorem}[Ore's theorem of the index]\label{Thmofindex}
  Choose monic polynomials $\phi_1,\dots, \phi_k \in \ZZ[x]$ whose reductions modulo $p$ are exactly the distinct irreducible factors of $\overline{f(x)}$. Let $\theta$ be a root of $f(x)$. Then, 
\[v_p([\Ocal_K:\ZZ[\theta]])\geq \ind_{\phi_1}(f)+\cdots + \ind_{\phi_k}(f).\]
Further, equality holds if, for every $\phi_i$, each side of the principal $\phi_i$-polygon has a separable residual polynomial.
\end{theorem}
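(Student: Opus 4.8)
The plan is to pass to the $p$-adic completion, reduce the global index to a sum of purely local contributions indexed by the $\phi_i$, and then settle the single-$\phi$ statement by comparing the $p$-adic valuation of the polynomial discriminant with that of the field discriminant. Applying $v_p$ to \eqref{IndexThm} gives the clean identity $v_p([\Ocal_K:\ZZ[\theta]])=\tfrac12\big(v_p(\Delta_f)-v_p(\Delta_K)\big)$, so the whole theorem becomes a statement about how $\ind_\phi(f)$ accounts for the gap between these two valuations.

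First I would localize. Write $\ol f=\prod_i\ol{\phi_i}^{\,e_i}$ for the factorization into distinct irreducibles over $\FF_p$. By Hensel's lemma $f$ factors over $\ZZ_p$ as $f=\prod_i f_i$ with $f_i\equiv\phi_i^{e_i}\pmod p$ and the $\ol{f_i}$ pairwise coprime. The discriminant formula $\Delta_{gh}=\Delta_g\Delta_h\,\mathrm{Res}(g,h)^2$ together with the fact that each cross resultant $\mathrm{Res}(f_i,f_j)$ is a $p$-adic unit (the $\ol{f_i}$ share no roots) yields $v_p(\Delta_f)=\sum_i v_p(\Delta_{f_i})$; correspondingly $\Ocal_K\otimes\ZZ_p\cong\prod_i\Ocal_i$, so $v_p(\Delta_K)=\sum_i v_p(\Delta_{\Ocal_i})$. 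Since a factor coprime to $\phi_i$ contributes nothing to the negative-slope part of the $\phi_i$-polygon, one also has $\ind_{\phi_i}(f)=\ind_{\phi_i}(f_i)$. Every quantity is thus additive over $i$, and the problem reduces to the case $\ol f=\ol\phi^{\,e}$ with $\ol\phi$ irreducible.

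For a single such $\phi$ the key structural input I would invoke is Ore's \emph{theorem of the product}: each side of the principal $\phi$-polygon, of slope $-\lambda$ and horizontal length $\ell$, cuts off a factor of $f$ over $\QQ_p$ all of whose roots $\theta$ satisfy $w(\phi(\theta))=\lambda$, where $w$ is the valuation on $\Qbar_p$ extending $v_p$. Grouping the roots by side and expanding $v_p(\Delta_f)=2\sum_{i<j}w(\theta_i-\theta_j)$ from these valuations, the lattice points lying on or under the polygon appear exactly as the combinatorial count of the discriminant contribution forced by the slopes; this is the computation identifying $\ind_\phi(f)$ with the guaranteed part of $\tfrac12 v_p(\Delta_f)$. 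The residual polynomial of a side records, in the residue field, the leading behaviour of the roots attached to that side: when it is separable, the side already splits $f$ into factors that generate the maximal order locally (the factors are \emph{regular}), so $v_p(\Delta_{\Ocal_i})$ accounts for precisely the ramification predicted by the slopes, no index leaks into $\Delta_K$, and equality holds.

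The main obstacle is exactly this single-$\phi$ computation: translating the lattice-point count into the arithmetic valuation $\tfrac12(v_p(\Delta_f)-v_p(\Delta_{\Ocal_i}))$ while simultaneously controlling the maximal-order discriminant $\Delta_{\Ocal_i}$, since a priori some of the lattice count could be absorbed into $\Delta_K$. The separability hypothesis is what closes this gap, because it guarantees that the first-order polygon resolves the $\QQ_p$-factorization completely and that each resulting factor is regular. For the inequality in the general (possibly inseparable) case, I would argue that refining $\phi$ to higher-order Newton polygons contributes only further non-negative index, so that the first-order total $\sum_i\ind_{\phi_i}(f)$ persists as a lower bound even when some residual polynomial fails to be separable.
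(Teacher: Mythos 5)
The paper gives no proof of this statement: it is quoted from Ore with a citation, and only Corollary \ref{iffCor} is derived from it in the text. So the only meaningful comparison is with the standard published proofs (Ore's original argument as organized in \cite{Montes} and \cite{Montes2}), and your architecture does match them: applying $v_p$ to \eqref{IndexThm}, the Hensel factorization $f=\prod_i f_i$ over $\ZZ_p$ with $\ol{f_i}=\ol{\phi_i}^{e_i}$, the additivity of $v_p(\Delta_f)$, of $v_p(\Delta_K)$, and of the $\phi_i$-indices over that factorization, and the consequent reduction to a single $\phi$ are all correct reductions. (One naming slip: the statement that each side of slope $-\lambda$ cuts off a $p$-adic factor whose roots satisfy $w(\phi(\theta))=\lambda$ is the \emph{theorem of the polygon}; the \emph{theorem of the product} is the additivity of principal $\phi$-polygons under multiplication, which is in fact the statement you need to justify $\ind_{\phi_i}(f)=\ind_{\phi_i}(f_i)$.)

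As a proof, however, the proposal has a genuine gap exactly where you flag ``the main obstacle'': the identification of the lattice-point count $\ind_\phi(f)$ with the guaranteed part of $\tfrac12\bigl(v_p(\Delta_f)-v_p(\Delta_K)\bigr)$ is described but never carried out, and that identification \emph{is} the theorem. Concretely, what is missing is (i) the identity $\ind(gh)=\ind(g)+\ind(h)+v_p(\mathrm{Res}(g,h))$ applied to the factorization of $f$ into side factors $f_S$; (ii) the evaluation of the cross resultants $v_p(\mathrm{Res}(f_S,f_{S'}))$ in terms of the slopes, matching them with the lattice points in the rectangles spanned by distinct sides; and (iii) a lower bound for $\ind(f_S)$ by the lattice points under the triangle of a single side, which requires either a direct valuation-theoretic computation with the roots or a recursion. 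Without (i)--(iii) the inequality is not established, and your fallback for the inseparable case --- that higher-order polygons only add non-negative index --- presupposes the full higher-order Montes machinery rather than proving the first-order bound. Likewise, the equality claim needs the theorem of the residual polynomial in quantitative form: separability forces each factor $f_{S,\psi}$ attached to an irreducible factor $\psi$ of the residual polynomial to be irreducible over $\QQ_p$ with ramification index the denominator of $\lambda_S$ and residue degree $\deg\phi\cdot\deg\psi$, from which one computes its local discriminant and checks that nothing is left over; ``the factors generate the maximal order locally'' is the conclusion of that computation, not a substitute for it.
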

For our applications we will employ a clean equivalence derived from Theorem \ref{Thmofindex}.

\begin{corollary}\label{iffCor}
The prime $p$ does not divide $[\Ocal_K:\ZZ[\theta]]$ if and only if $\ind_{\phi_i}(f)=0$ for all $i$. In this case each principal $\phi_i$-polygon is one-sided.
\end{corollary}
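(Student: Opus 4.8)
The plan is to deduce both implications directly from Ore's theorem of the index (Theorem~\ref{Thmofindex}), using that the $\phi$-index is a non-negative integer together with a short analysis of what a principal polygon of index zero must look like.

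The forward implication is immediate. If $p \nmid [\Ocal_K:\ZZ[\theta]]$, then $v_p([\Ocal_K:\ZZ[\theta]]) = 0$. Since each $\ind_{\phi_i}(f)$ counts lattice points it is a non-negative integer, so the inequality of Theorem~\ref{Thmofindex} gives $0 = v_p([\Ocal_K:\ZZ[\theta]]) \ge \ind_{\phi_1}(f) + \cdots + \ind_{\phi_k}(f) \ge 0$. Every summand therefore vanishes, and $\ind_{\phi_i}(f) = 0$ for all $i$. This direction requires nothing beyond Ore's inequality.

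For the converse I would first pin down the shape of a principal polygon, and in particular locate its right-hand endpoint. Let $\ell_i = \operatorname{ord}_{\overline{\phi_i}}(\overline f)$ be the multiplicity of $\overline{\phi_i}$ in $\overline{f}$. Reducing the $\phi_i$-adic development $f = \sum_j a_j(x)\phi_i(x)^j$ modulo $p$ recovers the $\overline{\phi_i}$-adic expansion of $\overline f$, so $v_p(a_j) \ge 1$ for $j < \ell_i$ while $v_p(a_{\ell_i}) = 0$. Hence the principal $\phi_i$-polygon terminates at $(\ell_i,0)$, and every one of its lattice points at an abscissa $m$ with $0 < m < \ell_i$ necessarily has ordinate at least $1$. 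Assuming $\ind_{\phi_i}(f) = 0$, I would then run the same lattice-point observation twice. First, if the polygon had two or more sides it would possess an interior vertex at some abscissa $m_0$ with $0 < m_0 < \ell_i$; by the previous remark this vertex is a lattice point with strictly positive coordinates lying on the polygon, so it would be counted by $\ind_{\phi_i}(f)$, a contradiction. Thus the polygon is one-sided, which already establishes the final assertion of the corollary. Second, applying the identical argument to any lattice point interior to the single side shows that the only lattice points on that side are its two endpoints; as recalled in Section~\ref{Notation}, the associated residual polynomial is then linear and hence separable. With separability in hand for every $\phi_i$, the equality clause of Theorem~\ref{Thmofindex} yields $v_p([\Ocal_K:\ZZ[\theta]]) = \ind_{\phi_1}(f) + \cdots + \ind_{\phi_k}(f) = 0$, so $p \nmid [\Ocal_K:\ZZ[\theta]]$.

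The main obstacle is the bookkeeping in the converse: one must correctly identify that the principal $\phi_i$-polygon ends at $(\ell_i,0)$ and that all of its lattice points strictly to the left of abscissa $\ell_i$ lie at height at least $1$. This positivity is exactly what forces both the one-sidedness of the polygon and the endpoint-only property of the single side; once it is established, the two structural conclusions and the separability of the residual polynomial follow from the same counting observation, and Ore's equality criterion does the rest.
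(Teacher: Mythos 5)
Your proposal is correct and follows essentially the same route as the paper: the forward direction from Ore's inequality, and the converse by observing that $\ind_{\phi_i}(f)=0$ forces the principal polygon to be one-sided with only its endpoints as lattice points, so the residual polynomials are linear and separable and the equality clause of Theorem~\ref{Thmofindex} applies. Your added bookkeeping (locating the right endpoint at $(\ell_i,0)$ and noting that all lattice points to its left have positive ordinate) just makes explicit what the paper states more briefly.
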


\begin{proof}
If $\ind_{\phi_i}(f)>0$, then Theorem \ref{Thmofindex} shows $v_p([\Ocal_K:\ZZ[\theta]])>0$. Conversely, if each $\ind_{\phi_i}(f)=0$, then the associated residual polynomials will all be linear and hence separable. This is because if $\ind_{\phi_i}(f)=0$, then the only integer lattice points on each side of the principal $\phi_i$-polygon are the initial and terminal vertices. Thus $v_p([\Ocal_K:\ZZ[\theta]])=0$ in this case. 

Notice that, since $f(x)\in \ZZ[x]$, if the principal $\phi_i$-polygon has more than one side, then there is necessarily a vertex with positive integer coordinates. This vertex will contribute to $\ind_{\phi_i}(f)$ and ensure that $p$ divides $[\Ocal_K:\ZZ[\theta]]$.
\end{proof}

Before continuing we would like to compare Corollary \ref{iffCor} to another criterion used for studying monogeneity. Consider the following theorem of Dedekind \cite{Dedekind}.

\begin{theorem}[Dedekind's index criterion]\label{Dedind} Let $f(x)$ be a monic, irreducible polynomial in $\ZZ[x]$ and let $\theta$ be a root of $f$. 
If $p$ is a rational prime, we have
\[f(x)\equiv \prod_{i=1}^r\phi_i(x)^{e_i} \bmod p,\]
where the $\phi_i(x)$ are monic lifts of the irreducible factors of $\overline{f(x)}$ to $\ZZ[x]$. Define
\[d(x):=\dfrac{f(x)-\prod\limits_{i=1}^r \phi_i(x)^{e_i}}{p}.  \]
Then $p$ divides $\left[\Ocal_{\QQ(\theta)}:\ZZ[\theta]\right]$ if and only if $\gcd\left(\overline{\phi_i(x)}^{e_i-1},\overline{d(x)}\right)\neq 1$ for some $i$, where we are taking the greatest common divisor in $\FF_p[x]$.
\end{theorem}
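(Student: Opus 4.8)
The plan is to deduce this criterion from the already-established Corollary \ref{iffCor}, thereby delivering the promised comparison between Ore's theorem of the index and Dedekind's criterion. Although the result is classically due to Dedekind and admits a direct proof via the structure of $\Ocal_K/p\Ocal_K$, routing it through the Newton-polygon machinery keeps everything self-contained. By Corollary \ref{iffCor}, we have $p \nmid [\Ocal_K:\ZZ[\theta]]$ if and only if $\ind_{\phi_i}(f)=0$ for every $i$, so it suffices to prove, one factor at a time, that $\ind_{\phi_i}(f)=0$ is equivalent to $\gcd(\overline{\phi_i}^{\,e_i-1},\overline{d})=1$.

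First I would record the shape of the $\phi_i$-adic development $f=\sum_j a_j(x)\phi_i(x)^j$. Reducing modulo $p$ and using that $\overline{\phi_i}^{e_i}$ exactly divides $\overline f$ shows $\overline{a_j}=0$ for $j<e_i$ while $v_p(a_{e_i})=0$; hence every point $(j,v_p(a_j))$ with $0\le j<e_i$ sits at height at least $1$, and the principal $\phi_i$-polygon is the lower hull running from $(0,v_p(a_0))$ down to $(e_i,0)$. The key bookkeeping step is to identify the constant term: since $\phi_i$ divides $\prod_j\phi_j^{e_j}$ in $\ZZ[x]$, reducing the identity $f-\prod_j\phi_j^{e_j}=p\,d$ modulo $\phi_i$ gives $a_0=p\,(d\bmod\phi_i)$, so that $v_p(a_0)=1$ exactly when $\overline{\phi_i}\nmid\overline{d}$, i.e. exactly when $\gcd(\overline{\phi_i}^{\,e_i-1},\overline d)=1$ (the case $e_i=1$ being automatic on both sides).

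It then remains to show $\ind_{\phi_i}(f)=0$ if and only if $v_p(a_0)=1$. If $v_p(a_0)=1$, the segment from $(0,1)$ to $(e_i,0)$ has height $1-j/e_i<1$ at each integer $0<j<e_i$, while the corresponding points lie at height $\ge 1$; thus this segment is the entire principal polygon, no lattice point $(m,n)$ with $m,n>0$ lies on or below it, and the index is $0$. Conversely, suppose $v_p(a_0)\ge 2$, which forces $e_i\ge 2$. I would estimate the polygon height at $x=1$: either $1$ is a vertex, where the height is $v_p(a_1)\ge 1$, or $1$ is interior to the first segment, which must issue from $(0,v_p(a_0))$ to some $(j_2,h_2)$ with $j_2\ge 2$, and there the height is at least $v_p(a_0)\,(1-1/j_2)\ge 2\cdot\tfrac12=1$. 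In either case the point $(1,1)$ lies on or below the polygon, so $\ind_{\phi_i}(f)\ge 1$. Assembling these equivalences over all $i$ yields the stated criterion.

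The main obstacle is this final polygon analysis: the forward direction requires the single-segment verification together with the observation that the intermediate development coefficients cannot pull the hull below height $1$, and the reverse direction requires a genuine \emph{lower} bound on the hull height at $x=1$ rather than the easy upper bound supplied by convexity. One should also keep in mind that $\ind_{\phi_i}(f)$ carries a factor of $\deg\phi_i$ in the general Montes formalism, but since $\deg\phi_i\ge 1$ this does not affect the vanishing condition and may be suppressed throughout.
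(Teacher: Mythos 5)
Your route is essentially the paper's own: the theorem itself is quoted from Dedekind, and Remark \ref{DedIndexEquiv} sketches exactly the derivation you carry out, namely that Corollary \ref{iffCor} reduces everything to whether $(1,1)$ lies on or under each principal $\phi_i$-polygon, which is governed by $v_p(a_0)$ together with the exponent $e_i$ (the paper phrases the latter condition as $v_p(a_1)=0$, which is equivalent to $e_i=1$ by your observation that $\overline{a_j}=0$ for $j<e_i$ while $v_p(a_{e_i})=0$). Your write-up correctly supplies the bookkeeping that the remark elides, in particular the identity $a_0=p\,(d\bmod \phi_i)$ and the genuine lower bound on the hull height at $x=1$ in the converse direction. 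One misstatement to repair: $v_p(a_0)\geq 2$ does \emph{not} force $e_i\geq 2$ (take $f=x^2-x+p^2$ with $\phi=x$, where $e=1$ and $v_p(a_0)=2$ yet $\ind_x(f)=0$), so the intermediate biconditional ``$\ind_{\phi_i}(f)=0$ if and only if $v_p(a_0)=1$'' is false when $e_i=1$. Since you have already dispatched $e_i=1$ as automatic on both sides of the criterion, the fix is simply to run the converse under the standing assumption $e_i\geq 2$ rather than deriving it from $v_p(a_0)\geq 2$; with that adjustment your height estimate is correct and the assembled statement is exactly Dedekind's criterion.
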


\begin{remark}\label{DedIndexEquiv} \textbf{(Equivalence of Polygons and Dedekind for Monogeneity) }
Corollary \ref{iffCor} is equivalent to Dedekind's index criterion. To see this, we consider the $\phi$-adic development, 
\[f(x)=\phi(x)^k+a_{k-1}\phi(x)^{k-1}+\cdots+a_2\phi(x)^2+a_1\phi(x)+a_0.\] For ease of exposition, let $(*)$ denote the condition that either $v_p(a_0)=1$ or $v_p(a_1)=0$. Figure \ref{ExampleMono} shows examples of the principal $\phi(x)$-polygons corresponding to these two cases. Since $\phi(x)$ divides $f(x)$ in $\FF_p[x]$, we have $v_p(a_0)>0$. We notice that $\ind_{\phi}(f)=0$ if and only if condition $(*)$ holds. 
This is because the principal $\phi$-polygon bounds or contains the point $(1,1)$ if and only if $\ind_{\phi}(f)>0$. From the $\phi$-adic development we can translate condition $(*)$: If $v_p(a_0)=1$, then no root of $\phi(x)$ is a root of $f(x)$ modulo $p^2$. If $v_p(a_1)=0$, then the exponent of $\phi(x)$ in the factorization of $f(x)$ modulo $p$ is one. Combining these observations with some algebra, we obtain Dedekind's index criterion. 

We have seen that the Montes algorithm and Dedekind's index criterion are both equally able to answer the question of whether or not a given polynomial in $\ZZ[x]$ is monogenic. Moreover, Newton polygons give us another way of picturing Dedekind's index criterion. It bears noting again that a full application of the Montes algorithm computes the complete factorization of any prime $p$ in $\QQ(\theta)$ and a basis that is $p$-integral, so it is a much more general tool than Dedekind's index criterion.
\end{remark}

\begin{figure}[h!]
\centering
\begin{subfigure}{.5\textwidth}
  \centering
  \begin{tikzpicture}

      \draw[<->] (0,3.5) -- (0,0) -- (3.5,0);
      \draw [fill] (0,3) circle [radius = .05];

      \node [right] at (.15,3) {$v_p\left(a_0(x)\right)$};
      
      \draw [fill] (1,0) circle [radius = .05];
      \node [above] at (2,.1) {$v_p\left(a_1(x)\right)$};
    
      \foreach \x in  {1,2,3,} {
      		\draw (\x, 2pt) -- +(0,-4pt);
            \node [below] at (\x, 0) {\x};
      }
      \foreach \y in  {1,2,3} {
      		\draw (2pt, \y) -- +(-4pt, 0);
            \node [left] at (0,\y) {\y};
      }
      
      \draw (0,3) -- (1,0);
	\end{tikzpicture}
  \caption{The exponent of $\phi(x)$ in $f(x)$ is one}
\end{subfigure}%
\begin{subfigure}{.5\textwidth}
  \centering
  \begin{tikzpicture}

      \draw[<->] (0,3.5) -- (0,0) -- (3.5,0);
      \draw [fill] (0,1) circle [radius = .05];
      d
      
      \node [right] at (.15,1.35) {$v_p\left(a_0(x)\right)$};
      
      \draw [fill] (3,0) circle [radius = .05];
      \node [above] at (3.5,.1) {$v_p\left(a_1(x)\right)$};
    
      \foreach \x in  {1,2,3,} {
      		\draw (\x, 2pt) -- +(0,-4pt);
            \node [below] at (\x, 0) {\x};
      }
      \foreach \y in  {1,2,3} {
      		\draw (2pt, \y) -- +(-4pt, 0);
            \node [left] at (0,\y) {\y};
      }
      
      \draw (0,1) -- (3,0);
	\end{tikzpicture}
  \caption{$\phi(x)$ is not a root modulo $p^2$}
\end{subfigure}
\caption{Examples of principal $\phi(x)$-polygons that could correspond to monogenic polynomials}
\label{ExampleMono}
\end{figure}
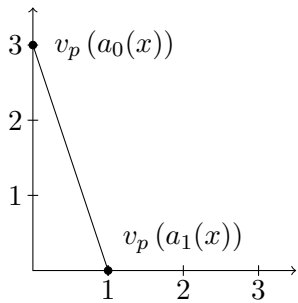
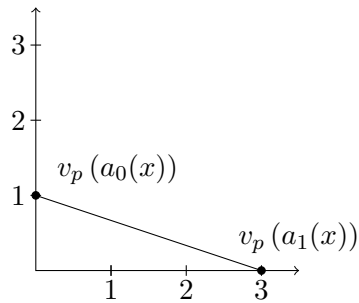

Lastly, in our paper `density' refers to natural density. Let $A \subseteq \NN$ and $a(x) := \# \{a \in A \mid a \leq x \}.$ If 
\[ \lim_{x \to \infty} \frac{a(x)}{x} = \a, \]
we say that $A$ has \emph{natural density} $\alpha$ in $\NN$.

\section{Statements of Results}\label{Results}

Consider the two families $f(x) = x^n + ax + b$ and $g(x) = x^n + cx^{n-1} + d$. The discriminants are 
\[\Delta_f=(-1)^{\frac{n^2-n}{2}}\left(n^nb^{n-1}+(1-n)^{n-1}a^n\right)\] and 
\[\Delta_g=(-1)^{\frac{n^2-n}{2}}d^{n-2}\left(n^nd+(1-n)^{n-1}c^n\right).\] 
We investigate the $n = 5$ and $n = 6$ cases in depth.

\begin{theorem}\label{QuinticLinearTerm}
Let $f(x)=x^5 + ax + b \in \ZZ[x]$ be irreducible and let $\theta$ be a root. Suppose $\frac{2^8a^5 + 5^5b^4}{\gcd(2^8a^5, 5^5b^4)}$ is square-free. Then $\theta$ generates a power integral basis for the ring of integers of $\QQ(\theta)$ if and only if for each prime $p \mid \gcd(2a, 5b)$ one of the following conditions holds:
  \begin{enumerate}[label = (\arabic*)]
  	\item $p \mid a$ and $p \mid b$, but $p^2 \nmid b$.
    \item $p = 2$, $2 \nmid a$, and $a+b\equiv 1 \pmod 4$. 
    \item $p = 5$, $5 \nmid b$, and  $b \not\equiv 1+a, 7+2a, 18+3a, 24+4a \pmod{25}$.
  \end{enumerate}
\end{theorem}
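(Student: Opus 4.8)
The plan is to apply Corollary \ref{iffCor} prime-by-prime to the primes dividing the index $[\Ocal_K : \ZZ[\theta]]$, reducing the global monogeneity question to a finite set of local Newton-polygon computations. Since $\theta$ generates a power integral basis exactly when $[\Ocal_K : \ZZ[\theta]] = 1$, equivalently when no prime $p$ divides the index, I would first identify which primes could possibly divide the index. By the index formula \eqref{IndexThm}, any such prime satisfies $p^2 \mid \Delta_f$. Using Theorem \ref{trinomialdisc} with $n = 5$, $k = 1$, the discriminant is $\Delta_f = (-1)^{10}\left(5^5 b^4 + (1-5)^4 a^5\right) = 5^5 b^4 + 2^8 a^5$ (up to sign), so I would factor out the common part and write $\Delta_f = \gcd(2^8a^5, 5^5b^4)\cdot M$, where $M = \frac{2^8a^5 + 5^5b^4}{\gcd(2^8a^5, 5^5b^4)}$. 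The square-free hypothesis on $M$ guarantees that any prime whose square divides $\Delta_f$ must divide $\gcd(2^8a^5, 5^5b^4) = \gcd(2a,5b)^{(\ast)}$, hence must divide $\gcd(2a, 5b)$; primes dividing $M$ can appear to at most the first power and so never obstruct monogeneity. This is the step that isolates exactly the primes listed in the theorem, namely $p \mid \gcd(2a,5b)$, which forces $p \in \{2, 5\}$ together with small-prime divisors of $a$ and $b$.

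**Next**, for each such prime $p$ I would factor $\overline{f(x)} = \overline{x^5 + ax + b}$ in $\FF_p[x]$, choose monic lifts $\phi_i$ of the irreducible factors, and compute the principal $\phi_i$-polygon and its $\phi_i$-index. Corollary \ref{iffCor} says $p \nmid [\Ocal_K : \ZZ[\theta]]$ precisely when every $\ind_{\phi_i}(f) = 0$. For primes $p$ dividing both $a$ and $b$ but not equal to $2$ or $5$, the reduction is $\overline{f} = x^5$, so there is a single $\phi = x$, and the $\phi$-adic development is $f(x) = \phi^5 + a\phi + b$; the relevant Newton-polygon data is controlled by $v_p(a)$ and $v_p(b)$, and by the equivalence in Remark \ref{DedIndexEquiv} the index vanishes exactly under condition $(\ast)$, i.e.\ $v_p(b) = 1$ (condition (1)). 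For $p = 2$ and $p = 5$ the reductions and lifts are more delicate because $a$ or $b$ need not reduce to $0$; here I would enumerate the possible factorization types of $\overline{f}$ modulo the prime and, for each repeated factor $\phi_i$, read off whether the principal polygon bounds the lattice point $(1,1)$. Translating ``$(1,1)$ not on or under the polygon'' back into congruence conditions on $a, b$ modulo $4$ and modulo $25$ yields conditions (2) and (3) respectively.

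**The main obstacle** I anticipate is the $p = 5$ case in condition (3). When $5 \nmid b$, the polynomial $\overline{f} = x^5 + \overline{a}x + \overline{b}$ over $\FF_5$ can have a repeated irreducible factor $\phi$ of degree greater than one, and determining the principal $\phi$-polygon then requires computing the $\phi$-adic development with $\phi$ a genuine lift of that higher-degree factor, not simply $x$. The four excluded residues $b \equiv 1+a,\ 7+2a,\ 18+3a,\ 24+4a \pmod{25}$ should correspond exactly to the cases where $\overline{f}$ has a repeated linear or irreducible factor whose $\phi$-polygon acquires a second side or passes on or above the obstructing lattice point; pinning down precisely these four residue classes (and verifying no others arise) is the computationally heaviest and most error-prone part. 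A clean way to organize this is to parametrize by the repeated root of $\overline{f}$ in $\overline{\FF_5}$, use the structure of the trinomial $x^5 + ax + b$ and its derivative $5x^4 + a \equiv a \pmod 5$ to locate repeated factors, and then apply the $(\ast)$ criterion of Remark \ref{DedIndexEquiv} after shifting $x \mapsto x + r$ to center $\phi$ at the repeated root. I would treat $p = 2$ (condition (2)) analogously but expect it to be simpler, since modulo $4$ the shift and the single condition $a + b \equiv 1 \pmod 4$ can be checked by a short direct computation once $2 \nmid a$ forces $\overline{f}$ to have a simple structure.
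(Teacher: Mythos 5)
Your proposal is correct and follows essentially the same route as the paper: the square-free hypothesis together with \eqref{IndexThm} restricts attention to primes dividing $\gcd(2a,5b)$, and Corollary \ref{iffCor} is then applied prime by prime to the $\phi$-adic developments centered at the repeated factors, with the congruence conditions read off from whether $(1,1)$ lies on or under the principal polygon. The only inaccuracy is your anticipated obstacle at $p=5$: since $5\mid\gcd(2a,5b)$ forces $5\mid a$, one gets $\overline{f}=x^5+\overline{b}=(x+\overline{b})^5$ in $\FF_5[x]$, so the repeated factor is always linear and the case is no harder than $p=2$.
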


\begin{theorem}\label{SexticLinearTerm}
Let $f(x)=x^6 + ax + b \in \ZZ[x]$ be irreducible and let $\theta$ be a root. Suppose $\frac{ 6^6b^5-5^5a^6}{\gcd(6^6b^5,5^5a^6)}$ is square-free. Then $\theta$ generates a power integral basis for the ring of integers of $\QQ(\theta)$ if and only if for each prime $p \mid \gcd(6b,5a)$ one of the following conditions holds:
\begin{enumerate}
	\item $p \mid a$ and $p \mid b$, but $p^2 \nmid b$.
    \item $p = 2$, $2 \nmid b$, and $a+b\equiv 1 \pmod 4$.
    \item $p = 3$, $3 \nmid b$, and the image of $(a,b)$ in $\left(\ZZ/9\ZZ\right)^2$ is \textbf{not} in the set
\[\left\{(0,1),(0,8),(3,2),(3,5),(6,2),(6,5)\right\}.\]
	\item $p = 5$, $5 \nmid a$, and $a \not\equiv 1 - 4b, 7 + 3b, 18 + 3b, 24 + 4b \pmod{25}$.
\end{enumerate}
\end{theorem}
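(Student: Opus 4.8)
The plan is to reduce monogeneity to a prime-by-prime index computation and then run the Newton-polygon criterion of Corollary \ref{iffCor} at each relevant prime. By \eqref{IndexThm}, $\theta$ generates a power integral basis precisely when $[\Ocal_K:\ZZ[\theta]]=1$, i.e. when $p\nmid[\Ocal_K:\ZZ[\theta]]$ for every prime $p$, and any such offending $p$ must satisfy $p^2\mid\Delta_f$. By Theorem \ref{trinomialdisc}, $\Delta_f=-(6^6b^5-5^5a^6)$. Writing $D=6^6b^5-5^5a^6$ and $G=\gcd(6^6b^5,5^5a^6)$, one checks that $G\mid D$ and that the primes dividing $G$ are exactly those dividing $\gcd(6b,5a)$. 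Hence if $p\nmid\gcd(6b,5a)$ then $v_p(D/G)=v_p(D)$, so the hypothesis that $D/G$ is square-free forces $v_p(\Delta_f)\le 1$ and therefore $p\nmid[\Ocal_K:\ZZ[\theta]]$. This isolates the finite set of primes $p\mid\gcd(6b,5a)$ as the only ones that can divide the index; this is exactly where the square-free hypothesis does its work.

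Fix such a prime $p$. I would apply Corollary \ref{iffCor}: we have $p\nmid[\Ocal_K:\ZZ[\theta]]$ iff $\ind_{\phi_i}(f)=0$ for every monic lift $\phi_i$ of the irreducible factors of $\overline{f}$ over $\FF_p$, which by the $(*)$ reformulation of Remark \ref{DedIndexEquiv} amounts to checking, for each $\phi_i$, that the two lowest coefficients $a_0(x),a_1(x)$ of the $\phi_i$-adic development satisfy $v_p(a_0)=1$ or $v_p(a_1)=0$. The primes split into cases according to $\overline{f}$. If $p\notin\{2,3,5\}$, then membership in $\gcd(6b,5a)$ forces $p\mid a$ and $p\mid b$, so $\overline{f}=x^6$, $\phi=x$, and $(a_0,a_1)=(b,a)$; since $v_p(a)\ge 1$, condition $(*)$ reduces to $v_p(b)=1$, which is condition (1). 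The identical computation handles $p\in\{2,3,5\}$ whenever additionally $p\mid b$ (respectively $p\mid a$ when $p=5$), again giving condition (1). The genuinely prime-specific analyses are the remaining ones. For $p=2$ with $2\nmid b$ (so $2\mid a$ and $\overline{f}=(x+1)^6$), evaluating $f$ and $f'$ at $-1$ gives $a_0=1-a+b$ and $a_1=a-6$; since $v_2(a_1)\ge1$, condition $(*)$ becomes $v_2(1-a+b)=1$, which is equivalent to $a+b\equiv 1\pmod 4$, namely condition (2).

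For $p=3$ with $3\nmid b$, the shape of $\overline{f}$ depends on $b\bmod 3$. When $b\equiv 1$ we get $\overline{f}=(x^2+1)^3$ with the quadratic lift $\phi=x^2+1$, and the development $f=(x^2+1)^3-3(x^2+1)^2+3(x^2+1)+(ax+b-1)$ yields $a_1=3$ and $a_0=ax+(b-1)$, so $v_3(a_1)=1$ and $(*)$ fails exactly when $9\mid a$ and $b\equiv 1\pmod 9$. When $b\equiv 2$ we get $\overline{f}=(x-1)^3(x+1)^3$ with the two linear lifts $x\mp 1$, and evaluating at $\pm1$ shows $(*)$ fails exactly when $1+a+b\equiv 0$ or $1-a+b\equiv 0\pmod 9$. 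Collecting the excluded residues $(a,b)\bmod 9$ reproduces the set in condition (3). Finally, for $p=5$ with $5\nmid a$ (so $5\mid b$) we have $\overline{f}=x(x+a)^5$; the factor $x$ appears to the first power (indeed $a_1=a$ is a unit mod $5$) and contributes nothing, while for $\phi=x+a$ the relevant coefficient is $a_0=f(-a)=a^6-a^2+b$, whose $5$-adic valuation is forced to be at least $1$, so $(*)$ fails precisely when $a^6-a^2+b\equiv 0\pmod{25}$.

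The main obstacle is this last congruence (and, to a lesser degree, the mod-$9$ bookkeeping for $p=3$): one must solve $a^6-a^2+b\equiv 0\pmod{25}$ for $a$ coprime to $5$. I would dispatch it by noting that every unit $a$ satisfies the congruence modulo $5$ (since $a^4\equiv 1$ and $5\mid b$), that the derivative $6a^5-2a\equiv -a$ is a unit modulo $5$, and hence Hensel's lemma lifts each of the four residue classes modulo $5$ to a unique solution modulo $25$; because $5\mid b$ kills the quadratic-in-$b$ corrections, each lift is linear in $b$. Computing the four lifts explicitly produces the excluded residues of condition (4). Assembling the four prime cases, $\theta$ is monogenic if and only if the displayed congruence condition holds at every $p\mid\gcd(6b,5a)$, which is the statement of the theorem.
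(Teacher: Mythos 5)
Your overall strategy is the same as the paper's: use \eqref{IndexThm} and the square-free hypothesis to confine the possible index divisors to primes $p\mid\gcd(6b,5a)$, then run Corollary \ref{iffCor} (in its $(*)$ form from Remark \ref{DedIndexEquiv}) on each irreducible factor of $\overline{f}$ at each such prime. Your treatments of condition (1), of $p=3$, and of $p=5$ (including the Hensel lifting of the four unit residues to solve $a^6-a^2+b\equiv 0\pmod{25}$) match the paper's computations.

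There is, however, a genuine error in your $p=2$ case. You assert that when $2\nmid b$ (so $2\mid a$) one has $\overline{f}=(x+1)^6$ over $\FF_2$. This is false: $x^6+1=(x^3+1)^2=(x+1)^2(x^2+x+1)^2$ in $\FF_2[x]$, whereas $(x+1)^6=x^6+x^4+x^2+1\neq x^6+1$. Consequently $\overline{f}$ has \emph{two} distinct irreducible factors, and Corollary \ref{iffCor} requires $\ind_{\phi}(f)=0$ for both lifts $\phi=x+1$ and $\phi=x^2+x+1$; checking only the linear factor does not establish the ``if'' direction at $p=2$. The paper carries out the $(x^2+x+1)$-adic development, finds $a_1(x)=2x-2$ and $a_0(x)=ax+b+1$, and observes that the resulting condition ($v_2(a)=1$ or $b\equiv 1\pmod 4$) is implied by the condition from the $(x+1)$ factor, so the stated criterion $a+b\equiv 1\pmod 4$ survives --- but this verification is a necessary step, not an optional one, and your proof omits it because of the incorrect factorization. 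Your computation for the $(x+1)$ factor itself ($a_0=1-a+b$, $a_1=a-6$, leading to $a+b\equiv 1\pmod 4$) is correct; you need only add the analysis of the quadratic factor to close the gap.
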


\begin{theorem}\label{QuinticnMinusOneTerm}
Let $g(x)=x^5 + cx^4 + d \in \ZZ[x]$ be irreducible and $\theta$ a root. Suppose $\frac{5^5d+2^8c^5}{\gcd(5^5d,2^8c^5)}$ is square-free. Then $\theta$ generates a power integral basis for the ring of integers of $\QQ(\theta)$ if and only if $d$ is square-free and if $5\mid c$ but $5\nmid d$, then $c+d \not\equiv 1, 7, 18, 24 \pmod{25}$.
\end{theorem}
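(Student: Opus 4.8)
The plan is to follow the template provided by Corollary \ref{iffCor}: the root $\theta$ generates a power integral basis if and only if no rational prime $p$ divides the index $[\Ocal_K : \ZZ[\theta]]$, and for each candidate prime this is decided by checking whether $\ind_{\phi}(g) = 0$ for every irreducible factor $\phi$ of $\overline{g} \bmod p$. First I would record the discriminant: specializing Theorem \ref{trinomialdisc} to $n = 5$, $k = 4$ gives $\Delta_g = d^3\left(5^5 d + 2^8 c^5\right)$. Writing $M = 5^5 d + 2^8 c^5$ and $G = \gcd(5^5 d, 2^8 c^5)$, the hypothesis is that $M/G$ is square-free; since $G \mid M$, for every prime $p \nmid G$ we have $v_p(M) = v_p(M/G) \le 1$. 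Combined with $\Delta_g = d^3 M$ and the index formula \eqref{IndexThm}, the only primes that can divide the index are those $p$ with $p^2 \mid \Delta_g$. A short case analysis over $p \in \{2, 5\}$ and over $p \mid \gcd(c,d)$ versus $p \nmid G$ shows this candidate set is exactly $\{p : p \mid d\}$ together with the single prime $5$ in the special regime $5 \mid c$, $5 \nmid d$ (where $v_5(M) \ge 5$ can exceed $1$ even though $5 \nmid d$).

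For a prime $p \mid d$ I would take $\phi = x$. Since $\overline{g} \equiv x^4(x + c)$ or $x^5 \bmod p$ according as $p \nmid c$ or $p \mid c$, the factor $x + c$ (when present) occurs to multiplicity one and contributes nothing to the index, so only the $x$-adic development matters. That development is simply $g(x) = x^5 + c x^4 + d$, whose relevant coefficients are $a_0 = d$, $a_4 = c$, $a_5 = 1$ with $a_1 = a_2 = a_3 = 0$. Because $v_p(a_1) = \infty$, condition $(*)$ of Remark \ref{DedIndexEquiv}—which is equivalent to $\ind_x(g) = 0$—reduces to $v_p(a_0) = v_p(d) = 1$. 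Hence $p \nmid [\Ocal_K : \ZZ[\theta]]$ for every $p \mid d$ if and only if $d$ is square-free, which accounts for the first condition of the theorem.

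The remaining work, and the main obstacle, is the prime $5$ in the regime $5 \mid c$, $5 \nmid d$. Here $c \equiv 0$ forces $\overline{g} \equiv x^5 + d \equiv (x + d)^5 \pmod 5$ (using $d^5 \equiv d$), so $\overline{g}$ has the single repeated factor $x + \overline{d}$. I would choose the lift $\phi(x) = x + d$ and pass to the $\phi$-adic development via the substitution $x \mapsto x - d$; then $a_0 = g(-d) = d\left(1 + c d^3 - d^4\right)$ and $a_1 = g'(-d) = 5 d^4 - 4 c d^3$. Since $5 \mid c$ gives $v_5(a_1) \ge 1$, condition $(*)$ again collapses to $v_5(a_0) = 1$, that is (as $5 \nmid d$) to $25 \nmid 1 + c d^3 - d^4$. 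The final step rewrites this as $1 + c d^3 - d^4 = 1 - d^3(d - c)$ and checks, under $5 \mid c$ and $5 \nmid d$, that $25 \mid 1 + c d^3 - d^4$ precisely when $c + d \equiv 1, 7, 18, 24 \pmod{25}$, the four residues corresponding to $d \equiv 1, 2, 3, 4 \pmod 5$. This is a finite computation modulo $25$, but it is the delicate part: one must confirm that the degree-one repeated factor really forces the single-vertex criterion $(*)$ rather than a genuinely multi-sided principal polygon, and then compress the two-variable congruence into the clean one-variable condition on $c + d$. Assembling the three cases through Corollary \ref{iffCor} yields the stated equivalence.
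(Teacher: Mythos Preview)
Your proposal is correct and follows essentially the same approach as the paper: identify the candidate primes from the discriminant $\Delta_g = d^3(5^5 d + 2^8 c^5)$ and the square-free hypothesis, treat primes $p \mid d$ via the $x$-adic development (giving the square-free condition on $d$), and treat the residual case $5 \mid c$, $5 \nmid d$ via the $(x+d)$-adic development (giving the congruence on $c+d$). Your use of condition $(*)$ from Remark~\ref{DedIndexEquiv} in place of drawing the Newton polygons is exactly the equivalence the paper itself records, so the arguments coincide.
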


\begin{theorem}\label{SexticnMinusOneTerm}
Let $g(x)=x^6 + cx^5 + d \in \ZZ[x]$ be irreducible and $\theta$ a root. Suppose $\frac{6^6d - 5^5c^6}{\gcd(6^6d, 5^5c^6)}$ is square-free. Then $\theta$ generates a power integral basis for the ring of integers of $\QQ(\theta)$ if and only if for every $p\mid \gcd(6d, 5c)$ one of the following conditions hold:
\begin{enumerate}
\item $d$ is square-free.
\item If $2\mid c$ and $2\nmid d$, then $c+d\equiv 1 \pmod 4$. 
\item If $3\mid c$ and $3\nmid d$, then the image of $(c,d)$ in $\left(\ZZ/9\ZZ\right)^2$ is in the set  \[\left\{(3,1),(3,4), (3,7), (6,1), (6,4), (6,7), (0,1), (0,2), (0,4), (0,5).\right\}.\]
\end{enumerate}
\end{theorem}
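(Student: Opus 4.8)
The plan is to apply Corollary \ref{iffCor} prime-by-prime to the trinomial $g(x) = x^6 + cx^5 + d$, exactly as in the companion Theorems \ref{QuinticLinearTerm}--\ref{QuinticnMinusOneTerm}. By the index formula \eqref{IndexThm}, $\theta$ generates a power integral basis if and only if $[\Ocal_K : \ZZ[\theta]] = 1$, i.e. no rational prime $p$ divides the index. The square-free hypothesis on $\frac{6^6 d - 5^5 c^6}{\gcd(6^6 d, 5^5 c^6)}$ controls the ``large'' part of the discriminant $\Delta_g = (-1)^{15} d^4(6^6 d - 5^5 c^6)$: any prime dividing the index must divide $\Delta_g$, and by the square-free assumption the only primes whose square can divide $\Delta_g$ are those dividing $d$ together with the primes $2,3,5$ arising from $\gcd(6^6 d, 5^5 c^6) = \gcd(6d, 5c)$ up to the relevant prime powers. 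So the first step is to reduce the problem to checking the index condition at each prime $p \mid \gcd(6d, 5c)$ and, separately, at each prime $p \mid d$.

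Next I would run the $\phi$-adic Newton polygon analysis at each of the relevant primes. For a prime $p$ dividing $d$, one factors $\overline{g(x)} = x^5(x + c) \bmod p$ (when $p \nmid c$) or handles the fully degenerate reduction when $p \mid \gcd(c,d)$; the interesting $\phi$ is $\phi = x$, and the $\phi$-adic development is simply $g(x) = x^6 + cx^5 + d$ itself, so the Newton polygon is read off directly from $v_p(d)$, $v_p(c)$, and the vertex at $(5, v_p(c))$, $(6,0)$. By Corollary \ref{iffCor}, $\ind_x(g) = 0$ precisely when the polygon is one-sided with no interior or boundary lattice point at $(1,1)$ — this is what forces $d$ square-free in condition (1), matching the $d^4$ factor of $\Delta_g$. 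The primes $p = 2, 3, 5$ require the finer congruence analysis: here one computes the principal $\phi$-polygon for the relevant lift $\phi$ of each repeated irreducible factor and, when the polygon alone does not decide the index (the borderline slope-one side through $(1,1)$), one examines the residual polynomial for separability, which is where the explicit congruence conditions modulo $4$, $9$, and $25$ come from.

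The main obstacle is the prime $p = 3$ and, to a lesser extent, $p = 5$: at these primes the polygon can be one-sided with a lattice point forcing $\ind_\phi(g) > 0$ only for certain residues, so the clean ``polygon is one-sided'' criterion does not immediately resolve monogeneity and I must pass to the residual polynomial and check separability by hand. This is precisely the step producing the exceptional set $\{(3,1),(3,4),(3,7),(6,1),(6,4),(6,7),(0,1),(0,2),(0,4),(0,5)\}$ in condition (3): one enumerates the residues of $(c,d)$ in $(\ZZ/9\ZZ)^2$ subject to $3 \mid c$, $3 \nmid d$, computes $d(x)$ as in Dedekind's criterion (Theorem \ref{Dedind}) or equivalently the second-order Newton data, and records exactly those pairs for which the residual polynomial is separable. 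I expect the $p=5$ case to mirror the linear-term theorems closely, since $5^5 c^6$ plays the analogous role to $5^5 a^6$, giving the congruence $c + d \not\equiv \cdots \pmod{25}$ by the same slope computation.

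Finally I would assemble the prime-by-prime conclusions: $\theta$ is monogenic iff $\ind_\phi(g) = 0$ at every prime, which by the reduction in the first step happens iff $d$ is square-free (handling all $p \mid d$ at once, including the tame primes and the wild contributions from $2,3$) \emph{and} the displayed congruence conditions hold at each of $p = 2,3$ when $p \mid c$, $p \nmid d$. Since the square-free hypothesis already guarantees every other prime contributes nothing to the index, this yields the stated necessary and sufficient conditions. The bookkeeping — tracking which primes are covered by ``$d$ square-free'' versus the residue conditions, and verifying the two do not overlap inconsistently — is routine but must be done carefully to ensure the four conditions are jointly exhaustive.
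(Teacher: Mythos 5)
Your overall architecture matches the paper's: reduce via \eqref{IndexThm} and the square-free hypothesis to the primes dividing $d$ or $\gcd(6d,5c)$, then run the $\phi$-adic Newton polygon analysis prime by prime and invoke Corollary \ref{iffCor}. However, there is a concrete error in your list of wild primes. You assert that $p=5$ requires a separate congruence analysis modulo $25$, ``mirroring the linear-term theorems.'' It does not: for $g(x)=x^6+cx^5+d$ we have $5\mid\gcd(6d,5c)$ if and only if $5\mid 6d$, i.e.\ $5\mid d$, so the prime $5$ is always absorbed into the $x$-adic development and the ``$d$ square-free'' condition (1). This is exactly why the theorem statement contains no modulo-$25$ clause, in contrast to Theorem \ref{QuinticnMinusOneTerm}, where $5\mid\gcd(5d,2c)$ forces only $5\mid c$ and a genuine wild case arises. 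Carrying out your outline as written would either produce a spurious extra condition or force you to discover mid-proof that the case is vacuous; the only primes needing the finer analysis here are $p=2$ and $p=3$ (each when $p\mid c$ and $p\nmid d$).

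A secondary, methodological point: you attribute the congruence conditions modulo $4$ and $9$ to checking separability of residual polynomials. In the paper's framework this is not where they come from. Corollary \ref{iffCor} makes the criterion purely combinatorial: $p\nmid[\Ocal_K:\ZZ[\theta]]$ if and only if $\ind_{\phi_i}(g)=0$ for every repeated factor $\phi_i$, which (since $v_p(a_1)\geq 1$ in each relevant $\phi$-adic development) reduces to requiring $v_p(a_0(x))=1$, i.e.\ that the lattice point $(1,1)$ lies strictly above the principal $\phi$-polygon. The congruences modulo $4$ and $9$ are exactly the translation of $v_2(a_0)=1$ and $v_3(a_0)=1$ for the developments at $x+1$, $x^2+x+1$ (for $p=2$) and at $x^2+1$, $x\pm 1$ (for $p=3$); no residual polynomial ever needs to be examined, since when the index is zero the residual polynomials are automatically linear and hence separable. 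Your plan would still reach the right answer if executed carefully, but the separability detour is unnecessary and suggests a misreading of how Corollary \ref{iffCor} closes the ``if and only if.''
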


With sufficient conditions in hand, one can ask about the density of coefficients satisfying these conditions. Naturally, we would like to prove the infinitude of some of the families of monogenic fields. 

\begin{theorem}\label{DenseLinear}
  Fix $n > 2$.
  Let $\theta$ be a root of $f(x) = x^n+bx+b \in \ZZ[x]$.  Then there are infinitely many $b$ such that $f$ is irreducible and $\theta$ generates a power integral basis for the ring of integers of $\QQ(\theta)$. In addition, the density of such $b$ is at least 
	\[\frac{6}{\pi^2}- \left( 1-\frac{6}{\pi^2}\prod_{p\mid (n-1)} \left(1-\frac{1}{p^2} \right)^{-1} \right)>21.58\%. \]
\end{theorem}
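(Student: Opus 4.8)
The plan is to exhibit a clean sufficient condition for monogeneity and then count the $b$ meeting it. Specializing the discriminant formula (Theorem \ref{trinomialdisc} with $k=1$ and $a=b$) gives
\[
\Delta_f=(-1)^{\frac{n^2-n}{2}}b^{n-1}\bigl(n^n+(1-n)^{n-1}b\bigr),
\]
so set $M:=n^n+(1-n)^{n-1}b$. I will show that if $b$ and $M$ are both square-free and $b>1$, then $f(x)=x^n+bx+b$ is irreducible and $\theta$ generates a power integral basis. Irreducibility is immediate from the Eisenstein criterion: a square-free $b>1$ has a prime factor $p$ with $v_p(b)=1$, and $p$ divides both the coefficient of $x$ and the constant term of $f$ while $p^2\nmid b$.

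For monogeneity I will verify $v_p([\Ocal_K:\ZZ[\theta]])=0$ for every prime $p$; by the index relation \eqref{IndexThm} only primes dividing $\Delta_f$ matter. If $p\mid M$ but $p\nmid b$, then square-freeness of $M$ gives $v_p(\Delta_f)=v_p(M)=1$, and \eqref{IndexThm} forces $v_p([\Ocal_K:\ZZ[\theta]])=0$. If instead $p\mid b$, then $\overline{f}=x^n$ in $\FF_p[x]$, so $\phi=x$ is the only lift to consider; the $x$-adic development is $f=x^n+bx+b$ with $a_0=a_1=b$, and square-freeness gives $v_p(a_0)=v_p(b)=1$. Thus condition $(*)$ of Remark \ref{DedIndexEquiv} holds, so $\ind_x(f)=0$ --- equivalently, the principal $x$-polygon is the single segment from $(0,1)$ to $(n,0)$, which lies strictly below $(1,1)$ and carries a linear (hence separable) residual polynomial. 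By Corollary \ref{iffCor}, $p\nmid[\Ocal_K:\ZZ[\theta]]$. Combining the cases, the index equals $1$.

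Next I compute the two relevant densities. The density of square-free $b$ is $6/\pi^2$. For the density of $b$ with $M$ square-free, I examine $p^2\mid M$ for each prime $p$. When $p\nmid(n-1)$ the leading coefficient $(1-n)^{n-1}$ is a unit modulo $p^2$, so exactly one residue class of $b$ is forbidden, giving local density $1-1/p^2$. When $p\mid(n-1)$ we have $v_p\bigl((1-n)^{n-1}\bigr)\ge n-1\ge2$, so $M\equiv n^n\pmod{p^2}$; since $\gcd(n,n-1)=1$ we get $p\nmid n^n$, hence $p^2\mid M$ never occurs and the local factor is $1$. A standard square-free sieve for the linear form $M$ then gives density $\prod_{p\nmid(n-1)}(1-1/p^2)=\frac{6}{\pi^2}\prod_{p\mid(n-1)}\bigl(1-1/p^2\bigr)^{-1}$.

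Finally I combine the two conditions by a union bound on complements: writing $A,B$ for the sets where $b$, respectively $M$, is square-free, the lower density of $A\cap B$ is at least $\mathrm{dens}(A)+\mathrm{dens}(B)-1$, which is precisely the claimed expression; removing the single value $b=1$ changes nothing, and positivity yields the infinitude statement. For the numerical bound, note that for $n>2$ the integer $n-1\ge2$ has a prime factor, so $\prod_{p\mid(n-1)}(1-1/p^2)^{-1}>1$ strictly and the bound exceeds $\frac{12}{\pi^2}-1>0.2158$. I expect the only genuinely technical point to be making the square-free density of the linear form $M$ rigorous (controlling the contribution of large square moduli via convergence of $\sum p^{-2}$); the monogeneity reduction is routine given Corollary \ref{iffCor}, and the union bound is elementary.
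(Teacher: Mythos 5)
Your argument is correct and follows essentially the same route as the paper: Eisenstein (via the $x$-adic Newton polygon) at primes dividing the square-free $b$, the index relation \eqref{IndexThm} at primes dividing $M=n^n+(1-n)^{n-1}b$ but not $b$ when $M$ is square-free, and a union bound on the two densities. The one step you flag as needing rigor --- the square-free density of the linear form $M$ --- is exactly what Theorem \ref{Prachar} supplies (applied with $m=n^n$ and $k=(n-1)^{n-1}$, which are coprime), so you can simply cite it rather than redo the sieve.
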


\begin{theorem} \label{DenseNotLinear}
	Fix $n > 2$.  Let $c$ be a nonzero integer such that $c \not = \pm 1$ and $c$ is square-free. Suppose $g(x) = x^n + cx^{n-1} + cd \in \ZZ[x]$ is irreducible and let $\theta$ be a root. Consider the quantity
	\[B = \frac{6}{\pi^2} \prod_{p \mid c} \frac{p}{p+1} - \left(1 - \frac{6}{\pi^2}\prod_{p \mid n} \frac{p^2}{p^2 - 1}  \right) . \]
	Then $B$ gives a lower bound on the density of $d$ such that $\theta$ generates a power integral basis for the ring of integers of $\QQ(\theta)$. In particular, if $c$ has exactly one prime factor or has exactly two prime factors and is coprime to $6$, then $B > 0$ and there are infinitely many $d$ yielding such monogenic fields. 
\end{theorem}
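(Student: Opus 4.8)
The plan is to isolate explicit sufficient conditions on $d$ guaranteeing monogeneity, to recognize each condition as a square-free or congruence condition on a linear form in $d$, and then to assemble the density from standard square-free sieves. First I would compute the discriminant: applying Theorem \ref{trinomialdisc} to $x^n + cx^{n-1} + cd$ (so $k=n-1$ and $\gcd(n,n-1)=1$) and pulling one copy of $c$ out of the second factor gives
\[\Delta_g = \pm\, c^{\,n-1}\, d^{\,n-2}\, F(d), \qquad F(d) := n^n d + (1-n)^{n-1} c^{\,n-1}.\]
Since $c$ is fixed and square-free, the only primes that can divide $[\mathcal{O}_K:\ZZ[\theta]]$ are those dividing $c$, those dividing $d$, and those dividing $F(d)$, and I would dispose of these three families in turn using Corollary \ref{iffCor} (equivalently the polygon picture of Remark \ref{DedIndexEquiv}).

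The prime-by-prime analysis runs as follows. For $p \mid c$: imposing $\gcd(c,d)=1$ forces $v_p(cd)=1$ and $v_p(c)=1$ (here square-freeness of $c$ is essential), so $\overline{g}=x^n$ and the $x$-adic Newton polygon is the single segment from $(0,1)$ to $(n,0)$, since the intermediate point $(n-1,1)$ lies strictly above it; hence $\ind_x(g)=0$. For $p \mid d$ with $p \nmid c$: imposing $d$ square-free gives $v_p(cd)=1$, $\overline{g}=x^{n-1}(x+\bar c)$, the $x$-polygon is the single segment $(0,1)$--$(n-1,0)$ of index $0$, and the factor $x+\bar c$ occurs only to the first power. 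In both cases the unique side has coprime slope, so its residual polynomial is linear hence separable, and Corollary \ref{iffCor} yields $p \nmid$ index. For $p \mid F(d)$ with $p\nmid cd$: if moreover $p \nmid n$ and $p^2 \nmid F(d)$, then $v_p(\Delta_g)=v_p(F(d))=1$, so the index relation \eqref{IndexThm} forces $v_p([\mathcal{O}_K:\ZZ[\theta]])=0$ with no polygon needed. The last case, $p \mid n$ and $p \mid F(d)$ with $p\nmid cd$, I would show is vacuous: modulo $p\mid n$ we have $F(d)\equiv (1-n)^{n-1}c^{\,n-1}\equiv c^{\,n-1}\pmod p$, which is nonzero as $p\nmid c$. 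Collecting these, I obtain the sufficient condition: \emph{if $d$ is square-free, $\gcd(c,d)=1$, and $F(d)$ is divisible by no square $p^2$ with $p\nmid n$, then (assuming $g$ irreducible) $\theta$ generates a power integral basis.}

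For the density count, the set $A=\{d : d \text{ square-free},\ \gcd(c,d)=1\}$ has density $\prod_{p\mid c}(1-\tfrac1p)\prod_{p\nmid c}(1-\tfrac1{p^2})=\tfrac{6}{\pi^2}\prod_{p\mid c}\tfrac{p}{p+1}$, the first term of $B$. The bad set $C=\{d : p^2\mid F(d)\text{ for some }p\nmid n\}$ I would treat with a square-free sieve on the linear form $F(d)=n^n d+(1-n)^{n-1}c^{\,n-1}$: for each $p\nmid n$ the congruence $p^2\mid F(d)$ cuts out a single residue class modulo $p^2$ (using $\gcd(n^n,p^2)=1$), so a Möbius inclusion--exclusion over square-free moduli coprime to $n$ gives that $C$ has density $1-\prod_{p\nmid n}(1-\tfrac1{p^2})=1-\tfrac{6}{\pi^2}\prod_{p\mid n}\tfrac{p^2}{p^2-1}$, exactly the quantity subtracted in $B$. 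Since the monogenic $d$ contain $A\setminus C$, their (lower) density is at least $\operatorname{(density}A)-\operatorname{(density}C)=B$, and the density-zero set of $d$ making $g$ reducible does not affect this. Finally, for $c$ a single prime, or a product of two primes each coprime to $6$, I would substitute the worst-case values of $\prod_{p\mid c}\tfrac{p}{p+1}$ (namely $\tfrac23$, resp. $\tfrac56\cdot\tfrac78=\tfrac{35}{48}$) together with the crude bound $\prod_{p\mid n}\tfrac{p^2}{p^2-1}>1$ into $B$ to verify $B>0$ numerically (getting roughly $0.013$ and $0.051$); positivity of the density then forces infinitely many such $d$.

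The main obstacle is the \emph{exact} evaluation of the density of $C$. A crude union bound only yields $\sum_{p\nmid n}p^{-2}$, which is strictly larger than $1-\prod_{p\nmid n}(1-p^{-2})$ and would not recover the stated $B$; I must therefore run the full inclusion--exclusion sieve for square-free-away-from-$n$ values of the linear form $F$ and control the tail $\sum_{p>P}p^{-2}$ to obtain the exact product. By contrast, the monogeneity analysis is comparatively routine once one notices the two structural features that make it clean: using the constant term $cd$ rather than $d$ forces valuation exactly $1$ at primes dividing $c$, and the reduction $F(d)\equiv c^{\,n-1}\pmod p$ eliminates the otherwise-awkward primes dividing $n$.
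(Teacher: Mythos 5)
Your proposal is correct and follows essentially the same route as the paper: Eisenstein/Newton-polygon control at the primes dividing $cd$, the relation \eqref{IndexThm} at the remaining primes, and subtraction of the two square-free densities (square-free $d$ coprime to $c$, and square-free values of the linear form) to obtain $B$. The inclusion--exclusion sieve you flag as the main obstacle is exactly Theorem \ref{Prachar} applied to the arithmetic progression $(1-n)^{n-1}c^{n-1} \bmod n^n$, which the paper simply cites, so no new work is needed there.
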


\begin{remark} The densities above are merely a byproduct of our proof methods, and appear weak compared to actual densities observed by computation. 
 See Section \ref{data} for these data.
 \end{remark}


\section{Proofs}\label{Monogeneity}
To warm up to Newton polygons, we will prove a well-known result, e.g. \cite[Lemma 2.17]{Narkiewicz}, on the $p$-integrality of polynomials that are Eisenstein at $p$. Proving this here will also simplify the proofs below.
\begin{lemma}\label{pEisenstein}
Suppose $f(x)=x^n+a_{n-1}x^{n-1}+\cdots+a_0$ is Eisenstein at $p$ (each $a_i$ is divisible by $p$ with $p$ dividing $a_0$ exactly once), and let $\theta$ be a root of $f(x)$. Then $p$ does not divide the index $[\Ocal_{\QQ(\theta)}:\ZZ[\theta]]$.
\end{lemma}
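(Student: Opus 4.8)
The plan is to apply Corollary \ref{iffCor} directly, using the Newton polygon machinery established above. Since $f$ is Eisenstein at $p$, every non-leading coefficient is divisible by $p$, so $\overline{f(x)} = x^n$ in $\FF_p[x]$. Thus $\overline{f}$ has exactly one irreducible factor, $\overline{x}$, and we may take $\phi(x) = x$ as its monic lift. By Corollary \ref{iffCor} it then suffices to show that $\ind_x(f) = 0$.

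With $\phi(x) = x$, the $\phi$-adic development of $f$ is simply $f(x) = \sum_{i=0}^n a_i x^i$ itself (with $a_n = 1$), so the constant coefficients $a_i$ give the lattice points $(i, v_p(a_i))$. The Eisenstein hypotheses yield $v_p(a_0) = 1$, $v_p(a_i) \ge 1$ for $1 \le i \le n-1$, and $v_p(a_n) = 0$. Hence the two extreme points are $(0,1)$ and $(n,0)$, while every intermediate point sits at height at least $1$.

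Next I would identify the principal $\phi$-polygon. The segment joining $(0,1)$ to $(n,0)$ has slope $-1/n$, and at each abscissa $i$ with $1 \le i \le n-1$ its height $1 - i/n$ is strictly less than $1 \le v_p(a_i)$. Therefore all intermediate points lie strictly above this segment, and the principal $x$-polygon is the single side from $(0,1)$ to $(n,0)$.

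Finally I would compute the index. A lattice point $(m,k)$ with $m,k > 0$ lying on or below this side would have to satisfy $k \le 1 - m/n < 1$, which is impossible for $k \ge 1$; in particular the point $(1,1)$ lies strictly above the polygon. Thus no such lattice point exists, so $\ind_x(f) = 0$ and Corollary \ref{iffCor} gives $p \nmid [\Ocal_{\QQ(\theta)} : \ZZ[\theta]]$. There is no genuine obstacle here beyond carefully confirming that the single side carries no positive lattice point on or under it; equivalently, one could observe that this one side, having only its endpoints as lattice points, carries a linear—hence separable—residual polynomial, so that the equality clause of Theorem \ref{Thmofindex} applies.
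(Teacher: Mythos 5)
Your argument is correct and follows essentially the same route as the paper's: both take $\phi(x)=x$, observe that the $x$-adic development is $f$ itself, identify the single side of slope $-1/n$ from $(0,1)$ to $(n,0)$, and conclude $\ind_x(f)=0$ so that Theorem \ref{Thmofindex} (via Corollary \ref{iffCor}) applies. Your write-up simply makes explicit the verification that intermediate points lie strictly above the segment, which the paper leaves to the figure.
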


\begin{proof}
We have $f(x)\equiv x^n \bmod p$, so we consider only the $x$-adic development of $f(x)$. The $x$-adic development of $f(x)$ is just $f(x)$, so our principal $x$-polygon is one-sided with slope $-\frac{1}{n}$; see Figure \ref{EisenPoly}. No positive integer lattice points lie on or below this polygon, thus the $x$-index is 0. Theorem \ref{Thmofindex} yields the result.

  \begin{figure}[h!]
	\begin{tikzpicture}
        \draw[help lines, color=gray!30, dashed] (0,0) grid (2.5,2.5);
        \draw[help lines, color=gray!30, dashed] (3.5,0) grid (5.5,2.5);

        \draw[<-] (0,2.5) -- (0,0) -- (2.5,0);
        \draw[->] (3.5, 0) -- (5.5,0);
		\draw[dotted] (2.5,0) -- (3.5,0);
        
        \foreach \x in  {1,2,4,5}
     		\draw (\x,2pt) -- +(0,-4pt);
        
		\draw (0,1) -- (2.5, 0.5);
        \draw[dotted] (2.5, 0.5) -- (3.5, 0.3);
        \draw (3.5, 0.3) -- (5, 0);
        
        
        \draw[fill] (0,1) circle [radius = .05];
		\draw[fill] (5,0) circle [radius = .05];
        
        \foreach \x in  {1,2} {
              \node [below] at (\x, 0) {\x};
        }
        \foreach \y in  {1,2} {
              \node [left] at (0,\y) {\y};
        }
        \node [below] at (4,0) {$n-1$};
        \node [below] at (5,-.05) {$n$};
	\end{tikzpicture}
	\caption{The principal $x$-polygon}
	\label{EisenPoly}
  \end{figure}
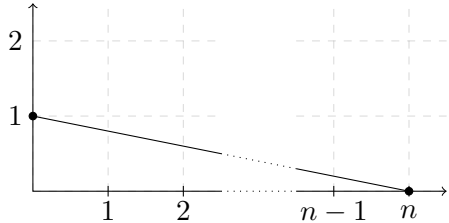
\end{proof}




We will be particularly deliberate with our proof of Theorem \ref{QuinticLinearTerm}. The proofs of the other theorems are very similar, so we will only highlight aspects that are distinct from the proof of Theorem \ref{QuinticLinearTerm}.

\begin{proof}[\bfseries Proof of Theorem \ref{QuinticLinearTerm}] Recall our set-up:  $f(x) = x^5 + ax + b \in \ZZ[x]$ is irreducible, $\theta$ is a root, $K = \QQ(\theta)$, and $\Delta_f=5^5b^4+4^4a^5=3125b^4+256a^5.$ By equation \eqref{IndexThm} and the hypothesis that $\frac{5^5b^4+2^8a^5}{\gcd\left(5^5b^4,2^8a^5\right)}$ is square-free, the only prime factors $p$ of $[\Ocal_K : \ZZ[\theta]]$ are divisors of $\gcd(5b,2a)$. Thus we need only consider the cases given in the theorem statement.




\textbf{\emph{Case 1.}} Suppose $p$ divides $a$ and $b$. Employing the ideas at work in Lemma \ref{pEisenstein}, we see $(1,1)$ is on or under the principal $x$-polygon if and only if $p^2 \mid b$. Corollary \ref{iffCor} shows that in this case $p$ divides the index $[\Ocal_K : \ZZ[\theta]]$ if and only if $p^2\mid b$. 

\textbf{\emph{Case 2.}} Suppose that $p=2$ and $2 \nmid a$. Since $2 \mid 5b$ and $\gcd(2, 5) = 1$, we see $2\mid b$. As a result, 
\[f(x) = x^5 + ax + b \equiv x^5 + ax \equiv x(x^4 + a) \pmod{2}.\]
However, $2 \nmid a$ implies that  $a \equiv 1 \pmod{2}$. Hence 
\[f(x) \equiv x (x^4 + 1) \equiv x (x^4 + 1^4) \equiv x (x + 1)^4 \pmod{2}. \] 
Since the exponent of $x$ is one, it does not contribute to the index. Thus we only need to look at the $(x + 1)$-adic development of $f$, which is
\[ f(x)=(x + 1)^5 - 5(x + 1)^4 + 10 (x + 1)^3 - 10 (x + 1)^2 + (a + 5)(x + 1) + b - a - 1.\] Note $a + 5 \equiv 1 + 5 \equiv 0 \pmod{2}$, so $v_2(a + 5) \geq 1$.
Thus $v_2(b - a - 1)$ is greater than $1$ if and only if the point $(1,1)$ is on or below the principal $(x+1)$-polygon. Thus, applying Corollary \ref{iffCor}, $2\mid [\Ocal_K : \ZZ[\theta]] \Leftrightarrow  v_2(b - a - 1)>1$. 
Hence we wish to ensure that $v_2(b - a - 1) = 1$. Since $a \equiv 1 \pmod{2}$ and $b \equiv 0 \pmod{2}$, we examine four possibilities for the image of $a,b$ in $\ZZ/4\ZZ$. These can seen in Table \ref{table}.
\begin{table}[H]
\begin{center}
  \begin{tabular}{|c | c | c |}
      \hline
      $a$ & $b$ & $b - a - 1$ \\
      \hline
      3 & 2 & 2 \\
      3 & 0 & 0 \\
      1 & 2 & 0 \\
      1 & 0 & 2 \\
      \hline
  \end{tabular}
  \vspace{.1 in}
  \caption{$a$, $b$, and $b-a-1$ modulo 4}
  \label{table}
\end{center}
\end{table}
To visualize Case 2, notice that when $(a,b)$ is congruent to (3,2) or (1,0) modulo 4, i.e., $a+b\equiv 1\pmod 4$, we have the principal $(x+1)$-polygon in Figure \ref{Fig:QuinticLinear3}.
\begin{figure}[h]
  \caption{The principal $(x+1)$-polygon}
  \label{Fig:QuinticLinear3}
  \begin{tikzpicture}
      \draw[help lines, color=gray!30, dashed] (0,0) grid (5.5,2.5);

      \draw[<->] (0,2.5) -- (0,0) -- (5.5,0);
      \draw [fill] (0,1) circle [radius = .05];
      
      \draw [dotted] (1,1) -- (1,2.5);
      \node [right] at (1.05,2) {$v_2(a+5)$};

      \draw (0,1) -- (4,0);
      
      \draw [fill] (2,1) circle [radius = .05];
      \draw [fill] (3,1) circle [radius = .05];
      \draw [fill] (4,0) circle [radius = .05];
      \draw [fill] (5,0) circle [radius = .05];
      
      \foreach \x in  {1,2,3,4,5} {
      		\draw (\x, 2pt) -- +(0,-4pt);
            \node [below] at (\x, 0) {\x};
      }
      \foreach \y in  {1,2} {
      		\draw (2pt, \y) -- +(-4pt, 0);
            \node [left] at (0,\y) {\y};
      }

  \end{tikzpicture}
\end{figure}
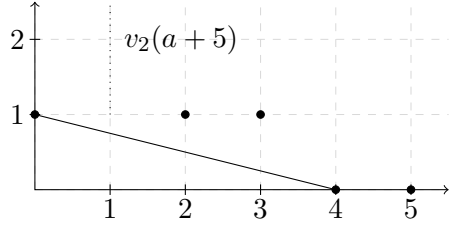
We can see that the integer lattice point corresponding to $(1,v_2(a+5))$ is on the dotted line above the principal $(x+1)$-polygon. 

\textbf{\emph{Case 3.}} Now, suppose that $p = 5$ and $5 \nmid b$. Since $5 \mid 2a$, we see that $5 \mid a$. Thus 
\[f(x) = x^5 + ax + b \equiv x^5 + b \equiv x^5 + b^5 \equiv (x + b)^5 \pmod{5}.\] 
The $(x + b)$-adic development is \[f(x)=(x+b)^5 - 5b(x + b)^4 + 10b^2(x+b)^3 - 10b^3(x+b)^2 + (5b^4 + a)(x+b) - b^5 -ba + b.\] 
We compute $v_5(-b^5 - ba + b)\geq 1$ and $v_5(5b^4 + a)\geq 1$. Thus the lattice point $(1,1)$ will be on or under the principal $(x+b)$-polygon if and only if if $v_5(-b^5 - ba + b)> 1$. Hence, again with Corollary \ref{iffCor}, $5$ divides $[\Ocal_K : \ZZ[\theta]]$ if and only if $25$ divides $-b^5 - ba +b$. 
 
Note that $b_0 = 1, 2, 3, 4$ are solutions to $-x^5 - ax + x \equiv 0 \pmod{5}$. We use Hensel's lemma to obtain solutions modulo $25$, and we find that the solutions $(a,b)$ are of the form $(a, 1 + a), (a, 7 + 2a), (a, 18 + 3a),$ and $(a, 24 + 4a)$. Therefore $25 \mid \left(-b^5 - ba +b\right)$ if and only if $(a,b)$ is one of these pairs. Corollary \ref{iffCor} finishes the argument. 

To see what is happening here, we note that if $(a,b)$ is not of one of the forms above, then we obtain the principal $(x+b)$-polygon in Figure \ref{Fig:QuinticLinear4}.
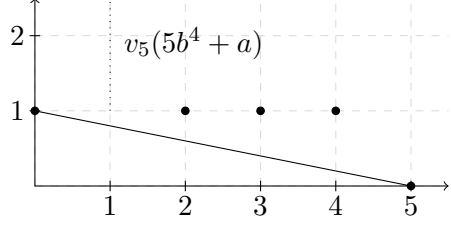
\begin{figure}[h]
  \caption{The principal $(x+b)$-polygon}
  \label{Fig:QuinticLinear4}
  \begin{tikzpicture}
      \draw[help lines, color=gray!30, dashed] (0,0) grid (5.5,2.5);

      \draw[<->] (0,2.5) -- (0,0) -- (5.5,0);
      \draw [fill] (0,1) circle [radius = .05];
      
      \draw [dotted] (1,1) -- (1,2.5);
      \node [right] at (1.05,1.9) {$v_5(5b^4 + a)$};

      \draw (0,1) -- (5,0);
       
      \draw [fill] (2,1) circle [radius = .05];
      \draw [fill] (3,1) circle [radius = .05];
      \draw [fill] (4,1) circle [radius = .05];
      \draw [fill] (5,0) circle [radius = .05];
      
      \foreach \x in  {1,2,3,4,5} {
      		\draw (\x, 2pt) -- +(0,-4pt);
            \node [below] at (\x, 0) {\x};
      }
      \foreach \y in  {1,2} {
      		\draw (2pt, \y) -- +(-4pt, 0);
            \node [left] at (0,\y) {\y};
      }
      
  \end{tikzpicture}
\end{figure}
The integer lattice point $(1,v_5(5b^4 + a))$ lies above the principal $(x+b)$-polygon on the dotted line. 

In conclusion, for all primes $p$ that could possibly divide $[\Ocal_K : \ZZ[\theta] ]$, we have established necessary and sufficient conditions for $v_p([\Ocal_K : \ZZ[\theta]]) = 0$. 
\end{proof}


\begin{proof}[\bfseries Proof of Theorem \ref{SexticLinearTerm}] Recall our set-up:  $f(x) = x^6 + ax + b \in \ZZ[x]$ is irreducible, $\theta$ is a root, $K = \QQ(\theta)$, and $\Delta_f=6^6b^5-5^5a^6=46656b^5-3125a^6$. We assume that $\frac{6^6b^5-5^5a^6}{\gcd\left(6^6b^5,5^5a^6\right)}$ is square-free and consider primes $p$ dividing $\gcd(6b,5a)$. Our approach and Case 1 are exactly analogous to the proof of Theorem \ref{QuinticLinearTerm}.

\textbf{\emph{Case 2.}} 
    Suppose $p = 2$ and $2 \nmid b$. We see $2 \mid a$ and as a result
		\[ f(x) = x^6 + ax + b = x^6 + b \equiv (x^3 + b)^2 \pmod{2}. \]
Furthermore $b \equiv 1 \pmod 2$, so 
  \[ f(x) \equiv (x^3 + 1)^2 \equiv \left[(x + 1)(x^2 + x + 1)\right]^2 \pmod{2}. \]
We have two irreducible factors to consider. For the irreducible factor $(x + 1)$, the $(x + 1)$-adic development of $f(x)$ is 
    \begin{align*}
    (x + 1)^6 	&- 6(x + 1)^5 + 15(x + 1)^4 - 20(x + 1)^3 \\ 
    			&+ 15(x + 1)^2 + (a - 6)(x + 1) - a + b + 1.
    \end{align*}
We observe that $a_1(x) = a - 6 \equiv 0 \pmod{2}$, so $v_2(a_1(x)) \geq 1$. We want to ensure $4 \nmid a_0(x) = -a + b + 1$. Thus $(a,b)$ must be equivalent to $(2,3)$ or $(0,1)$ modulo 4.
 
We turn our attention to the other irreducible factor, $x^2 + x + 1$. The $(x^2 + x + 1)$-adic development of $f$ is 
 	\[(x^2 + x + 1)^3 - 3x(x^2 + x + 1)^2 + (2x - 2)(x^2 + x + 1) + ax + b + 1.\]
It is clear that $a_1(x) = 2x - 2 \equiv 0 \pmod{2}$, so $v_2(a_1(x)) \geq 1$. We need to ensure that $4 \nmid a_0(x) = ax + b + 1$. Thus we need either $v_2(a)=1$ or $b\equiv 1 \pmod 4$. 

Since the conditions coming from the irreducible factor $(x+1)$ are more restrictive, we conclude that $2\dnd[\Ocal_K : \ZZ[\theta] ]$ if and only if $(a,b)$ is equivalent to either $(2,3)$ or $(0,1)$ modulo 4.

\textbf{\emph{Case 3.}} Suppose $p = 3$ and $3 \nmid b$. Since $3 \mid a$, we have
	\[ f(x) = x^6 + ax + b \equiv x^6 + b \equiv (x^2 + b)^3 \pmod{3}. \]
There are two subcases.

\textbf{\emph{Subcase 3.1.}} Suppose $b \equiv 1$ modulo 3. Then, $x^2 + b \equiv x^2 + 1$, which is irreducible. Hence, $f(x) \equiv (x^2 + 1)^3 \pmod{3}$. The $(x^2 + 1)$-adic development of $f$ is 
    \[(x^2 + 1)^3 - 3(x^2 + 1)^2 + 3(x^2 + 1) + ax + b - 1 .\]
Clearly, $v_3(a_1(x)) = 1$. In this subcase, $v_3([\Ocal_K : \ZZ[\theta] ]) = 0 \Leftrightarrow v_3(ax + b - 1) =1$. Translating, either $a \not \equiv 0 \pmod{9}$ or $b \not \equiv 1 \pmod{9}$.

\textbf{\emph{Subcase 3.2.}} Suppose $b \equiv 2$ modulo 3. Then, $x^2 + b \equiv x^2 + 2 \equiv (x + 1)(x + 2) \pmod{3}$, and 
	\[ f(x) \equiv ((x+1)(x+2))^3 \pmod{3}. \]
First, we examine the factor $(x+1)$. The $(x+1)$-adic development of $f$ is 
\begin{align*} 
(x+1)^6 	&- 6(x+1)^5 + 15(x+1)^4 - 20(x+1)^3 \\ 
 			& + 15(x+1)^2 + (a - 6)(x+1) - a + b + 1.
\end{align*}
We observe that $v_3(a - 6) \geq 1$. To avoid $-a + b + 1 \equiv 0 \pmod{9}$ is to require that $a \not\equiv b + 1 \pmod{9}$.

    Lastly, we look at the factor $(x+2)$. The $(x+2)$-adic development of $f$ is 
   \begin{align*} 
   	(x+2)^6 &- 12(x+2)^5 + 60(x+2)^4 - 160(x+2)^3 \\
    		&+ 240(x+2)^2 + (a - 192)(x+2) - 2a + b + 64. 
    \end{align*}
Since $v_3(a-192) \geq 1$, we want $-2a + b + 64 \not\equiv 0 \pmod{9}$. This happens exactly when $b \not\equiv 2a - 64 \equiv 2a - 1 \pmod{9}$.
 
Therefore, in this subcase, we see $v_3([\Ocal_K : \ZZ[\theta] ]) = 0$ if and only if the reduction of $(a,b)$ modulo 9 is not a member of the set
\[\left\{(0,1),(0,8),(3,2),(3,5),(6,2),(6,5)\right\}.\]

\textbf{\emph{Case 4.}} Finally, assume that $p = 5$ and $5 \nmid a$. Since $5\mid b$, we have 
\[f(x)\equiv x^6 + ax\equiv x(x+a)^5.\] The only irreducible factor that concerns us is $x+a$. The $(x+a)$-adic development of $f(x)$ is 
    	\begin{align*}
          (x+a)^6 	& - 6a(x+a)^5 + 15a^2(x+a)^4 - 20a^3(x+a)^3 \\ 
          			&+ 15a^4(x+a)^2 + (-6a^5 + a)(x+a) + a^6 - a^2 + b.
        \end{align*}
Proceeding in the same manner as in the proof of Theorem \ref{QuinticLinearTerm}, $v_5([\Ocal_K : \ZZ[\theta] ]) = 0$ if and only if $a$ is not of the form $1 - 4b, 7 + 3b, 18 + 3b, \text{ or } 24 + 4b \pmod {25}.$

For all primes $p$ which could possible divide $[\Ocal_K : \ZZ[\theta]]$, we have shown the necessity and sufficiency of our conditions.
\end{proof}

\begin{proof}[\bfseries Proof of Theorem \ref{QuinticnMinusOneTerm}]
Recall that we are considering the irreducible polynomial $g(x)=x^5 + cx^4 + d \in \ZZ[x]$. 
One computes $\Delta_g=d^3\left(5^5d+4^4c^5\right)=d^3\left(3125d+256c^5\right)$. We assume that $\dfrac{5^5d+2^8c^5}{\gcd\left(5^5d,2^8c^5\right)}$ is square-free. The possible prime divisors of the index are primes dividing $d$ or $\gcd(5d,2c)$. The only prime we may have to consider that is not necessarily a divisor of $d$ is 5.

For primes $p\mid d$ we have 
\[g(x)\equiv x^5+cx^4\equiv x^4(x+c)\pmod p.\]
As the exponent is one, the factor $x+c$ contributes nothing to the index. 
The $x$-adic development is again $g(x)$. By the standard argument, $v_p([\Ocal_K : \ZZ[\theta] ]) = 0$ if and only if $d$ is square-free. 
 
\pmb{$p=5$:}  Suppose now that $5\mid c$ and $5 \nmid d$. We have 
\[f(x) = x^5 + cx^4 + d \equiv x^5 + d \equiv x^5 + d^5 \equiv (x + d)^5 \pmod{5}.\] 
The $(x + d)$-adic development of $f(x)$ is given by 
\begin{align*}
(x+d)^5 &+ (c-5d)(x + d)^4 + (10d^2-4cd)(x+d)^3 \\
&+ (6cd^2-10d^3)(x+d)^2 + (5d^4 - 4cd^3)(x+d) + cd^4 + d - d^5.
\end{align*} 
As we expect, 
$5\mid [\Ocal_K : \ZZ[\theta]] \Leftrightarrow 25 \mid \left(cd^4 + d - d^5\right)$. One computes that $25$ divides $cd^4 + d - d^5$ if and only if $c+d\equiv 1,7,18,$ or $24 \pmod {25}$. 
\end{proof}

\begin{proof}[\textbf{Proof of Theorem \ref{SexticnMinusOneTerm}}] 

We remind ourselves that we are considering $g(x)=x^6+cx^5+d$. We have $\Delta_g=-d^4(6^6d-5^5c^6)$ and by hypothesis $\frac{6^6d-5^5c^6}{\gcd(6^6d,5^5c^6)}$ is square-free. We consider primes $p$ dividing $d$ or $\gcd(6d,5c)$. The only primes we may have to consider that are not divisors of $d$ are 2 and 3.

Our routine argument shows that prime divisors of $d$ divide the index if and only if their square divides $d$.

\pmb{$p=2$:}  Suppose $2\mid c$ and $2\nmid d$. Reducing yields \[g(x)=x^6+cx^5+d\equiv (x+1)^2(x^2+x+1)^2 \pmod 2.\] The $(x+1)$-adic development is 
\begin{align*}
g(x)=(x+1)^6+&(-6+c)(x+1)^5+(15-5c)(x+1)^4+(-20+10c)(x+1)^3\\
&+(15-10c)(x+1)^2+(-6+5c)(x+1)-c+d+1.
\end{align*}
This factor not contributing to the index is equivalent to $(c,d)$ reducing to either $(0,1)$ or $(2,3)$ in $\left(\ZZ/4\ZZ\right)^2$. 

Continuing, the $(x^2+x+1)$-adic development is 
\begin{align*}
g(x)=(x^2+x+1)^3+&(-3x+cx-2c)(x^2+x+1)^2\\
&+(2x+cx+3c-2)(x^2+x+1)-cx-c+d+1.
\end{align*}
We are concerned with $v_2(-cx-c+d+1)=\min(v_2(-c),v_2(-c+d+1))$. For this factor not to contribute to the index it is necessary and sufficient that $c\equiv 2 \pmod 4$ or $d\equiv 1 \pmod 4$. The $(x+1)$-adic development is more restrictive, so $v_2([\Ocal_K : \ZZ[\theta] ]) = 0$ if and only if $(c,d)$ reduces to either $(0,1)$ or $(2,3)$ in $\left(\ZZ/4\ZZ\right)^2$.

\pmb{$p=3$:}  Suppose $3\mid c$ and $3\nmid d$. If $d\equiv 1 \pmod 3$, then $g(x)\equiv (x^2+1)^3 \pmod 3$. The $(x^2+1)$-adic development is 

\begin{align*}
g(x)=(x^2+1)^3&+(cx-2c-3)(x^2+x+1)^2\\
&+(6x+cx+3c)(x^2+x+1)-cx-c+d+2.
\end{align*}
Thus $v_3([\Ocal_K : \ZZ[\theta] ]) = 0$ if and only if either $c\equiv 3,6 \pmod 9$ or $d\equiv 1,4 \pmod 9$.

If $d\equiv 2$ modulo 3, then $g(x)\equiv (x+1)^3(x-1)^3$ modulo 3. The $(x+1)$-adic development is above. The $(x-1)$-adic development is 
\begin{align*}
g(x)=(x-1)^6+&(6+c)(x-1)^5+(15+5c)(x-1)^4+(20+10c)(x-1)^3\\
&(15+10c)(x-1)^2+(6+5c)(x-1)+1+c+d.
\end{align*}
Combining this with the conditions coming from the $(x+1)$-adic development it is necessary and sufficient that $v_3(d-c+1)=v_3(d+c+1)=1$. Therefore $v_3([\Ocal_K : \ZZ[\theta] ]) = 0$ if and only if the image of $(c,d)$ in $\left(\ZZ/9\ZZ\right)^2$ is either $(0,2)$ or $(0,5)$.
\end{proof}

\section{Infinitude of the Families}\label{Infinity}

In this section we will let $n\geq 2$, but restrict the coefficients of our families and find that they are monogenic infinitely often. To do this, we will actually prove that the coefficients yielding monogenic fields have positive density in $\ZZ$. This requires considering the density of square-free values of parts of the discriminant. 

In general, showing a polynomial takes on infinitely many square-free values can be difficult:  for example, it is not known whether there is a single quartic polynomial that is square-free infinitely often \cite{BookerBrowning}.  In our case, we only require some results on linear polynomials. The first is a result from Prachar \cite{Prachar} about the density of square-free integers congruent to $m$ modulo $k$. Let $S(x;m,k)$ denote the number of square-free integers not exceeding $x$ that are congruent to $m$ modulo $k$.

\begin{theorem}\label{Prachar}
If $\gcd(m,k)=1$ and $k\leq x^{\frac{2}{3}-\epsilon}$, then 
$$S(x;m,k)\sim \frac{6x}{\pi^2k}\prod_{p\mid k} \left(1-\frac{1}{p^2} \right)^{-1}\quad\quad (x\rightarrow\infty).$$
\end{theorem}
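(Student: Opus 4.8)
The plan is to detect square-freeness with the Möbius function and reduce to counting integers in a single residue class. Writing the indicator of the square-free integers as $\mu^2(n)=\sum_{d^2\mid n}\mu(d)$, I would interchange the order of summation to obtain
$$S(x;m,k)=\sum_{\substack{n\le x\\ n\equiv m\,(k)}}\sum_{d^2\mid n}\mu(d)=\sum_{d\le\sqrt x}\mu(d)\,\#\{n\le x:\ n\equiv m\,(k),\ d^2\mid n\}.$$
First I would note that the hypothesis $\gcd(m,k)=1$ forces $\gcd(d,k)=1$ in every surviving term: if a prime $p$ divided both $d$ and $k$, then $d^2\mid n$ would give $p\mid n$, while $n\equiv m\,(k)$ would give $p\mid m$, contradicting $\gcd(m,k)=1$. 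For $\gcd(d,k)=1$ the congruences $n\equiv m\,(k)$ and $n\equiv 0\,(d^2)$ combine by the Chinese Remainder Theorem into a single class modulo $kd^2$, so the inner count is $\tfrac{x}{kd^2}+O(1)$.

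Next I would extract the main term. Summing the contributions gives
$$S(x;m,k)=\frac{x}{k}\sum_{\substack{d\le\sqrt x\\ \gcd(d,k)=1}}\frac{\mu(d)}{d^2}+(\text{error}).$$
Completing the $d$-sum to infinity (the tail being $O(1/\sqrt x)$) and evaluating the resulting Euler product yields
$$\sum_{\substack{d\ge1\\ \gcd(d,k)=1}}\frac{\mu(d)}{d^2}=\prod_{p\nmid k}\Bigl(1-\frac1{p^2}\Bigr)=\frac1{\zeta(2)}\prod_{p\mid k}\Bigl(1-\frac1{p^2}\Bigr)^{-1}=\frac{6}{\pi^2}\prod_{p\mid k}\Bigl(1-\frac1{p^2}\Bigr)^{-1},$$
which is exactly the claimed main term after multiplication by $x/k$.

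The delicate point, and the main obstacle, is controlling the error so that it is $o(x/k)$ \emph{uniformly} for $k$ as large as $x^{2/3-\epsilon}$. The crude estimate, replacing each inner count by $\tfrac{x}{kd^2}+O(1)$ and summing the $O(1)$ over all $d\le\sqrt x$, only produces an error of size $O(\sqrt x)$, which is $o(x/k)$ merely for $k=o(\sqrt x)$. To reach the stated range I would split the $d$-sum at $d\asymp\sqrt{x/k}$: for $d\le\sqrt{x/k}$ the class modulo $kd^2$ genuinely meets $[1,x]$ and the accumulated $O(1)$ errors amount to $O(\sqrt{x/k})$, whereas for $\sqrt{x/k}<d\le\sqrt x$ the class contains at most one integer of $[1,x]$, so the contribution is the number of such $d$ for which the unique lift $m\,\overline{d^2}\bmod k$ falls below $x/d^2$. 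Estimating this last count is the crux: it is an equidistribution statement for the inverse residues $m\,\overline{d^2}\bmod k$, naturally attacked through bounds for the associated (Kloosterman-type) exponential sums. Since the quality of those exponential-sum estimates is precisely what determines the admissible size of $k$ and yields the threshold $x^{2/3-\epsilon}$, and since the statement is quoted from Prachar, at this final stage I would invoke his analysis rather than reprove the exponential-sum input.
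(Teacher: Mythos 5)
The paper does not prove this statement at all: Theorem \ref{Prachar} is quoted verbatim from Prachar's paper \cite{Prachar} and used as a black box in Section \ref{Infinity}, so there is no internal proof to compare against. That said, your reconstruction is sound as far as it goes. The Möbius detection $\mu^2(n)=\sum_{d^2\mid n}\mu(d)$, the observation that $\gcd(m,k)=1$ kills all terms with $\gcd(d,k)>1$, the CRT reduction to a single class modulo $kd^2$, and the evaluation of $\sum_{\gcd(d,k)=1}\mu(d)d^{-2}$ as $\frac{6}{\pi^2}\prod_{p\mid k}(1-p^{-2})^{-1}$ all correctly produce the stated main term, and you are right that the naive $O(\sqrt{x})$ error only covers $k=o(\sqrt{x})$, so that the entire content of the theorem in the range $\sqrt{x}\ll k\le x^{2/3-\epsilon}$ lies in the treatment of $d>\sqrt{x/k}$. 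Your reformulation of that range as counting $d$ with $m\,\overline{d^2}\bmod k\le x/d^2$ is correct, though one remark: Prachar's own handling of this range is elementary rather than Kloosterman-theoretic --- he switches the roles of the variables, writing $n=d^2t$ and, for each fixed $t$, bounding the number of $d$ with $d^2\equiv m\bar{t}\pmod{k}$ by the divisor-type bound $O(k^{\epsilon})$ on square roots modulo $k$; optimizing the cutoff is exactly what produces the exponent $2/3$. Exponential-sum inputs enter only in later refinements pushing past $2/3$. Since you explicitly defer this step to Prachar, just as the paper does, the proposal is acceptable, but it is a sketch with the genuinely hard estimate outsourced rather than a complete proof.
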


We will also need to know the number of integers not exceeding $x$ that are square-free and coprime to $k$. Denote this quantity by $T(x;k)$. The following is a straightforward corollary of Theorem \ref{Prachar} if one notes there are $\phi(k)=k\prod_{p\mid k}\frac{p-1}{p}$ distinct congruence classes modulo $k$ that are relatively prime to $k$.

\begin{corollary}\label{CoprimeDensity}
With the notation as above

\[T(x;k) \sim \frac{6x}{\pi^2} \prod_{p \mid k} \frac{p}{p+1} \quad\quad (x \to \infty).\]

\end{corollary}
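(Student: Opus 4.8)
The plan is to decompose the count $T(x;k)$ according to residue classes modulo $k$ and apply Theorem \ref{Prachar} to each relevant class. An integer is coprime to $k$ precisely when it lies in one of the $\phi(k)$ reduced residue classes modulo $k$, and these classes partition the integers coprime to $k$. Hence every square-free integer coprime to $k$ and not exceeding $x$ lies in exactly one class $m$ with $\gcd(m,k)=1$, and the number of such integers in that class is exactly $S(x;m,k)$. This gives the exact identity
\[T(x;k) = \sum_{\substack{0 \le m < k \\ \gcd(m,k)=1}} S(x;m,k).\]

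Next I would apply Theorem \ref{Prachar} to each of the $\phi(k)$ summands. Because $k$ is fixed, for all sufficiently large $x$ we certainly have $k \le x^{2/3-\epsilon}$, so the hypothesis of Theorem \ref{Prachar} is met and each term satisfies
\[S(x;m,k) \sim \frac{6x}{\pi^2 k} \prod_{p\mid k}\left(1-\frac{1}{p^2}\right)^{-1} \quad (x\to\infty).\]
Since the number of summands $\phi(k)$ is a fixed finite quantity independent of $x$, I may add these asymptotic equivalences term by term to obtain
\[T(x;k) \sim \phi(k)\cdot \frac{6x}{\pi^2 k}\prod_{p\mid k}\left(1-\frac{1}{p^2}\right)^{-1}.\]

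Finally, it remains only to simplify the constant. Writing $\phi(k)/k = \prod_{p\mid k}\frac{p-1}{p}$ and $\left(1-\frac{1}{p^2}\right)^{-1} = \frac{p^2}{(p-1)(p+1)}$, the product over $p\mid k$ simplifies factor-by-factor to $\prod_{p\mid k}\frac{p}{p+1}$, which is exactly the claimed asymptotic. I do not expect any genuine obstacle here: the one point requiring care is the interchange of the asymptotic relation with a sum, and this is harmless precisely because $\phi(k)$ does not grow with $x$. The corollary is therefore a direct bookkeeping consequence of Prachar's theorem together with the standard formula for $\phi(k)$.
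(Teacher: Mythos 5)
Your argument is correct and is exactly the route the paper intends: it notes that the corollary follows from Theorem \ref{Prachar} by summing over the $\phi(k)=k\prod_{p\mid k}\frac{p-1}{p}$ reduced residue classes modulo $k$, which is precisely your decomposition $T(x;k)=\sum_{\gcd(m,k)=1}S(x;m,k)$ followed by the simplification $\frac{p-1}{p}\cdot\frac{p^2}{(p-1)(p+1)}=\frac{p}{p+1}$. Your added care about summing finitely many asymptotic relations is a welcome detail the paper leaves implicit.
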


\begin{proof}[\bfseries Proof of Theorem \ref{DenseLinear}]
We are considering the polynomial $f(x) = x^n+bx+b$. First note that if $b$ is square-free, then $f$ is irreducible by Eisenstein's Criterion. Recall that the density of square-free $b$ is $\frac{6}{\pi^2}$. We have computed $\Delta_f=\pm b^{n-1}(n^n+(1-n)^{n-1}b)$, and Lemma \ref{pEisenstein} tells us that if $n^n+(1-n)^{n-1}b$ is square-free in addition to $b$, the number field generated by $f(x)$ is monogenic. Notice that choices of $b\in \ZZ$ are in bijection with integers congruent to $n^n$ modulo $(n-1)^{n-1}$. We take $S(x;n^n,(n-1)^{n-1})$. By Theorem \ref{Prachar}, the density of $b$ such that $n^n + (1-n)^{n-1}b$ is square-free is
\[
\frac{6}{\pi^2}\prod_{p\mid (n-1)} \left( 1-\frac{1}{p^2} \right)^{-1}>\frac{6}{\pi^2}.
\] 
Note that we do not have a factor of $(n-1)^{n-1}$ in the denominator of $\frac{6}{\pi^2}$ because we are considering the density of square-free integers congruent to $n^n$ among all integers congruent to $n^n$. 
For the purposes of a lower bound on the density of 
\[
  \left\{ b \in \ZZ : \mbox{$b$ and $n^n+(1-n)^{n-1}b$ are square-free} \right\},
\] 
the worst case scenario is that every value of $n^n + (1-n)^{n-1}b$ that is not square-free occurs when $b$ is square-free.  Thus the density of square-free $b$ with $n^n+(1-n)^{n-1}b$ also square-free is at least 
\[\frac{6}{\pi^2}- \left(1-\frac{6}{\pi^2}\prod_{p\mid (n-1)} \left( 1 - \frac{1}{p^2} \right)^{-1} \right) > \frac{6}{\pi^2} - \left( 1 - \frac{6}{\pi^2} \right) \approx 21.58\%.\]
\end{proof} 

\begin{proof}[\bfseries Proof of Theorem \ref{DenseNotLinear}]
    Consider $g(x) = x^n + cx^{n-1} + cd$ with $n$ and $c$ fixed such that $c$ is square-free, $c \neq \pm 1$, and $\gcd(c,n)=1$. Since $c\neq \pm 1$, Eisenstein's criterion shows $g$ is irreducible when $\gcd(c,d)=1$. 
    
    We wish to analyze the density of $d$ for which $g$ is monogenic. 
	First, for $g$ to be monogenic it is necessary that $\gcd(c,d)=1$. We have computed $\Delta_g=\pm(cd)^{n-2}\left(n^n(cd)+(1-n)^{n-1}c^n\right)$. If $\gcd(c,d)=1$, the product $cd$ is square-free if $d$ is square-free. If $d$ and $cd n^n+(1-n)^{n-1} c^n$ are square-free, then Lemma \ref{pEisenstein} shows that $g$ is monogenic. The factor of $c$ is extraneous, so we will investigate when $d n^n+(1-n)^{n-1} c^{n-1}$ is square-free. 

From Corollary \ref{CoprimeDensity}, the proportion of square-free $d$ that are coprime to $c$ is given by
    \[\frac{6}{\pi^2} \prod_{p \mid c} \frac{p}{p+1}. \]
In addition, Theorem \ref{Prachar} tells us the density of square-free integers among all integers congruent to  $(1-n)^{n - 1} c^{n-1}$ modulo $n^n$ is
    \[\frac{6}{\pi^2} \prod_{p \mid n} \frac{p^2}{p^2 - 1}. \]
Thus (much as in the last proof) a lower bound on the density of square-free $d$ coprime to $c$ such that $(1-n)^{n - 1} c^{n-1} + d n^n$ is square-free is,
\begin{equation*}\label{toughdensity}
B=\frac{6}{\pi^2} \prod_{p \mid c}\frac{p}{p+1}  - \left(1 - \frac{6}{\pi^2}\prod_{p \mid n} \frac{p^2}{p^2 - 1} \right)=\frac{6}{\pi^2}\left(\prod_{p \mid c}\frac{p}{p+1}+\prod_{p \mid n} \frac{p^2}{p^2 - 1}\right)-1.
\end{equation*}
This is non-trivial if this value is positive.

If $c$ has exactly one prime factor, then 
\[
  \frac{6}{\pi^2} \prod_{p \mid c}\frac{p}{p+1}\geq \frac{6\cdot 2}{\pi^2\cdot 3}.
\]
Hence 
\[B> \frac{6\cdot 2}{\pi^2\cdot 3}  - \left(1 - \frac{6}{\pi^2}\right)\approx 0.013.\]
On the other hand, if $c$ has at most two prime factors and is coprime to $6$, then we have
\[
  \frac{6}{\pi^2} \prod_{p \mid c}\frac{p}{p+1}\geq \frac{6\cdot 5 \cdot 7}{\pi^2\cdot 6 \cdot 8}.
\]
Hence 
\[B> \frac{6\cdot 5\cdot 7}{\pi^2\cdot 6\cdot 8}  - \left(1 - \frac{6}{\pi^2}\right)\approx 0.051.\]
\end{proof}

In both proofs, the densities would be improved if we could assume that the square-freeness of the two relevant quantities is independent. If this was the case, then we would expect a lower bound for the family of Theorem \ref{DenseLinear} of at least
\[
  B_1 := \frac{36^2}{\pi^4}\prod_{p \mid (n-1)} \left( 1 - \frac{1}{p^2} \right)^{-1}
\]
and for the family of Theorem \ref{DenseNotLinear} of at least
\[
  B_2 := \frac{6^2}{\pi^4} \prod_{p \mid c} \frac{p}{p+1} \prod_{p \mid n} \frac{p^2}{p^2-1}.
\]
Note that, using $\prod_p \left( 1-\frac{1}{p^2} \right)^{-1} = \zeta(2)$,
\[
  0.28 \approx \frac{27}{\pi^4} \le B_1 \le  \frac{6}{\pi^2} \approx 0.61.
\]
Similarly, we have
\[
  0 \le B_2 \le \frac{6^2}{\pi^4} \prod_p \left( 1 + \frac{1}{p^2-1} \right) \le \frac{72}{\pi^4}  \approx 0.74.
\]

\section{Computational Data}\label{data}

Table \ref{fig:data} is for comparison to Theorems \ref{DenseLinear} and \ref{DenseNotLinear}.  We can see it is rare that one of our trinomials yields a monogenic field for which it is not a generator. It is also noteworthy that our theorems on the monogeneity of the trinomials $f$ and $g$ do not capture all instances in which $f$ and $g$ yield monogenic fields. Specifically, there are instances when the relevant factors of $\Delta_f$ and $\Delta_g$ are not square-free, but those square factors do not contribute to the index. It does not appear the machinery we use is adequate to understand when and why these square factors do not contribute to the index. 

\begin{figure}[h!]
\begin{center}
\begin{table}[H] \caption{Monogenic Percentages for Degrees 5 and 6}
\begin{center}
\bgroup
\def\arraystretch{1.1}
\begin{tabular}{c  c  c  c  c} 
& \% & & \% with $\theta$ & \% satisfying hypotheses\\
Family & monogenic & & a generator & of relevant Theorem\\
\toprule
$x^5+bx+b$ & 52.46 & & 50.50 & 50.50\\
$x^6+bx+b$ & 58.49 & & 57.71 & 57.71\\

$x^5+cx^4+c$ & 44.84 & & 43.10 & 35.92\\
$x^6+cx^5+c$ & 58.68 & & 58.00 & 29.00\\

\hline

$x^5+ax+b$ & 61.17 & & 60.86 & 60.86 \\
$x^6+ax+b$ & 61.10 & & 60.90 & 60.90 \\

$x^5+cx^4+d$ & 55.78 & & 51.80 & 51.80\\
$x^6+cx^5+d$ & 45.43 & & 44.66 & 26.00 \\

\hline

$x^5+2x^4+2d$ & 36.88 & & 33.67 & 33.67 \\
$x^5+3x^4+3d$ & 43.96 & & 43.29 & 43.29 \\
$x^5+4x^4+4d$ & 65.97 & & 0.00 & 0.00 \\
$x^5+5x^4+5d$ & 42.19 & & 42.08 & 42.08\\
$x^5+6x^4+6d$ & 32.05 & & 28.85 & 28.85\\
$x^5+7x^4+7d$ & 45.18 & & 45.13 & 45.13\\
$x^5+8x^4+8d$ & 13.08 & & 0.00 & 0.00\\

\hline

$x^6+2x^5+2d$ & 40.29 & & 38.48 & 38.48 \\
$x^6+3x^5+3d$ & 43.53 & & 43.28 & 14.43\\
$x^6+4x^5+4d$ & 2.50 & & 0.00 & 0.00\\
$x^6+5x^5+5d$ & 48.11 & & 48.09 & 16.03\\
$x^6+6x^5+6d$ & 30.38 & & 28.86 & 28.84\\
$x^6+7x^5+7d$ & 51.57 & & 51.57 & 17.19\\
$x^6+8x^5+8d$ & 4.91 & & 0.00 & 0.00\\



\bottomrule
\end{tabular}
\caption*{\small{For families with a single parameter $a,b,c,$ or $d$, the values tested were $[-500000,500000]$. For families with two parameters the values tested were $[-500,500]$. The percentages are rounded to the nearest hundredth.}}
\egroup
\end{center}
\label{fig:data}
\end{table}
\end{center}
\end{figure}

\bibliography{bibliography}{}
\bibliographystyle{abbrvnat}
\end{document}